\def\benm{\begin{enumerate}}
\def\eenm{\end{enumerate}}
\def\bal{\begin{align}}
\def\eal{\end{align}}
\def\SO{{\rm SO}}
\newtheorem{theorem}{Theorem}[section]
\theoremstyle{definition}
\newtheorem{example}[theorem]{Example}
\newtheorem{proposition}[theorem]{Proposition}
\newtheorem{corollary}[theorem]{Corollary}
\theoremstyle{remark}
\newtheorem{remark}[theorem]{Remark}
\numberwithin{equation}{section}
\begin{document}

\title[Continuous Gabor transform for semi-direct product of locally compact groups]{Continuous Gabor transform for semi-direct product of locally compact groups}

\author[Arash Ghaani Farashahi]{Arash Ghaani Farashahi}
\address{Department of Pure Mathematics, Faculty of Mathematical sciences, Ferdowsi University of
Mashhad (FUM), P. O. Box 1159, Mashhad 91775, Iran.}
\address{Center of Excellence in Analysis on Algebraic Structures (CEAAS),
Ferdowsi University of Mashhad (FUM), P. O. Box 1159, Mashhad 91775, Iran.}
\email{ghaanifarashahi@hotmail.com}

\curraddr{}



\subjclass[2000]{Primary 43A30, Secondary 43A25, 43A15.}

\date{}


\keywords{semi-direct product, time-frequency plane(group), modulation, translation, short time Fourier transform (STFT), continuous Gabor transform, Plancherel Theorem, inversion formula.}
\thanks{E-mail addresses: ghaanifarashahi@hotmail.com (Arash Ghaani Farashahi)}

\begin{abstract}
Let $H$ be a locally compact group, $K$ be an LCA group, $\tau:H\to Aut(K)$ be a continuous homomorphism and $G_\tau=H\ltimes_\tau K$ be the semi-direct product of $H$ and $K$ with respect to the continuous homomorphism $\tau$. In this article we introduce the $\tau\times\widehat{\tau}$-time frequency group $G_{\tau\times\widehat{\tau}}$. We define the $\tau\times\widehat{\tau}$-continuous Gabor transform of $f\in L^2(G_\tau)$ with respect to a window function $u\in L^2(K)$ as a function defined on $G_{\tau\times\widehat{\tau}}$.
It is also shown that the $\tau\times\widehat{\tau}$-continuous Gabor transform satisfies the Plancherel Theorem and reconstruction formula. This approach is tailored for choosing elements of $L^2(G_\tau)$ as a window function.
Finally, we illustrate application of these methods in the case of some well-known semi-direct product groups.
\end{abstract}

\maketitle

\section{\bf{Introduction}}

In \cite{Gab} Gabor used translations and modulations of the Gaussian signal to represent one dimensional signals. The Gabor transform, named after Gabor, is a special case of the short-time Fourier transform (STFT). It is used to determine the sinusoidal frequency and phase content of local sections of a signal as it changes over time. The function to be transformed is first multiplied by a Gaussian function, which can be regarded as a window, and the resulting function is then transformed with a Fourier transform to derive the time-frequency analysis. The window function term means that the signal near the time being analyzed will have higher weight. The Gabor transform of a signal $x(t)$ is precisely defined by;
\begin{equation}\label{SG}
G\{x\}(y,\omega)=\int_{-\infty}^{+\infty}x(t)e^{-\pi(t-y)^2}e^{-2\pi i\omega t}dt.
\end{equation}
Due to (\ref{SG}) the Gabor transform of a signal $x(t)$ is a function defined on $\mathbb{R}\times\widehat{\mathbb{R}}$
called the time-frequency plane. There is also standard extension of the continuous Gabor transform of a signal $x({\bf t})$ on $\mathbb{R}^n$ which is defined for $({\bf y},{\bf w})\in \mathbb{R}^n\times\widehat{\mathbb{R}^n}$ by (see \cite{FithZim, Gro1})
\begin{equation}\label{SGN}
G\{x\}({\bf y},{\bf w})=\int_{\mathbb{R}^n}x({\bf t})e^{-\pi\|{\bf t}-{\bf y}\|^2}e^{-2\pi i{\bf w}.{\bf t}}d{\bf t}.
\end{equation}
Since the theory of Gabor analysis based on the structure of translations and modulations (time-frequency plane), it is also possible to extend concepts of the Gabor theory to other locally compact abelian (LCA) groups.
For more explanation, we refer the reader to the monograph of ${\rm Gr\ddot{o}chenig}$ \cite{Gro2} or complete work of Feichtinger and ${\rm Str\ddot{o}hmer}$ \cite{FithStro}. The continuous Gabor transform for LCA groups is closely related to the Feichtinger-${\rm Gr\ddot{o}chenig}$ theory which is called the voice transform. In view of voice transform, continuous Gabor transform for an LCA group $G$ is precisely the voice transform generated by ${\rm Schr\ddot{o}dinger}$ representation of the Weyl-Heisenberg group associate with $G$ (see \cite{HeilW}).

Many locally compact groups which are used in mathematical physics and also various topics of engineering such as the Heisenberg group, affine group or Euclidean groups are non-abelian groups. Although most of them can be considered as a semi-direct product of an LCA group with another locally compact group. We recall that passing through the harmonic analysis of LCA groups (see \cite{FollH, 50}) to the classical harmonic analysis of non-abelian locally compact groups (see \cite{Dix, FollH, Lip, Seg1, Ta}), many useful results and basic concepts in abelian harmonic analysis collapse, which play important roles in the usual Gabor theory. If $G$ is a non-abelain locally compact group via a natural approach, modulation by a character will be replaced by a modulation by an equivalence class of an irreducible representation of $G$ and also the natural candidate for the generalization of the time frequency plane will be $G\times \widehat{G}$, where $\widehat{G}$ stands for the set of all equivalence class of irreducible continuous unitary representations of $G$. It is clear that this extension will not be appropriate from the the numerical computational and also application viewpoints. Thus, we need a new approach to find an appropriate generalization of the continuous Gabor transform which be useful and also efficient in application.

This article contains 5 sections. Section 2 is devoted to fix notations including a brief summary about harmonic analysis of semi-direct product of locally compact groups also standard Fourier analysis and Gabor analysis on LCA groups. In section 3 we assume that $H$ is a locally compact group and $K$ is an LCA group, $\tau:H\to Aut(K)$ is a continuous homomorphism and $G_\tau=H\ltimes_\tau{K}$. We define the  $\tau\times\widehat{\tau}$-time frequency group $G_{\tau\times\widehat{\tau}}$ and the $\tau\times\widehat{\tau}$-continuous Gabor transform of $f\in L^2(G_\tau)$ with respect to a window function $u\in L^2(K)$. We also prove a Plancherel and inversion formula for the $\tau\times\widehat{\tau}$-continuous Gabor transform. To choose elements of $L^2(G_\tau)$ as window functions we define the the $\tau\otimes\widehat{\tau}$-time frequency group $G_{\tau\otimes\widehat{\tau}}$ and also the $\tau\otimes\widehat{\tau}$-continuous Gabor transform in section 4. As an application, we study this theory on the affine group, Weyl-Heisenberg group and the Euclidean groups in section 5.

\section{\bf{Preliminaries and notations}}

Let $H$ and $K$ be locally compact groups with identity elements $e_H$ and $e_K$ respectively and left Haar measures $dh$ and $dk$ respectively,
also let $\tau:H\to Aut(K)$ be a homomorphism such that the map $(h,k)\mapsto \tau_h(k)$ is continuous from $H\times K$ onto $K$.
There is a natural topology, sometimes called
Braconnier topology, turning $Aut(K)$ into a Hausdorff topological group(not necessarily locally compact),
which is defined by the sub-base of identity neighbourhoods
\begin{equation}
\mathcal{B}(F,U)= \{ \alpha\in Aut(K): \alpha(k),\alpha^{-1}(k)\in Uk\ \forall k\in F\},
\end{equation}
where $F\subseteq K$ is compact and $U\subseteq K$ is an identity neighbourhood. Continuity of a homomorphism $\tau:H\to Aut(K)$ is equivalent with the continuity of the map $(h,k)\mapsto \tau_h(k)$ from $H\times K$ onto $K$ (see \cite{HO}).

The semi-direct product $G_\tau=H\ltimes_\tau K$ is the locally compact topological group with the underlying set $H\times K$ which is equipped by the product topology and also the group operation is defined by
\begin{equation}\label{0.1}
(h,k)\ltimes_\tau(h',k')=(hh',k\tau_h(k'))\hspace{0.5cm}{\rm and}\hspace{0.5cm}(h,k)^{-1}=(h^{-1},\tau_{h^{-1}}(k^{-1})).
\end{equation}
If $H_1=\{(h,e_K):h\in H\}$ and $K_1=\{(e_H,k):k\in K\}$ then $K_1$ is a closed normal subgroup and $H_1$ is a closed subgroup of $G_\tau$.
The left Haar measure of $G_\tau$ is $d\mu_{G_\tau}(h,k)=\delta(h)dhdk$ and the modular function $\Delta_{G_\tau}$ is $\Delta_{G_\tau}(h,k)=\delta(h)\Delta_H(h)\Delta_K(k)$, where the positive continuous homomorphism $\delta:H\to(0,\infty)$ is given by (15.29 of \cite{HR1})
\begin{equation}\label{t}
dk=\delta(h)d(\tau_h(k)).
\end{equation}
From now on, for all $p\ge 1$ we denote by $L^p(G_\tau)$ the Banach space $L^p(G_\tau,\mu_{G_\tau})$ and also $L^p(K)$ stands for $L^p(K,dk)$.
When $f\in L^p(G_\tau)$, for a.e. $h\in H$ the function $f_h$ defined on $K$ via $f_h(k):=f(h,k)$ belongs to $L^p(K)$ (see \cite{FollR}).

If $K$ is an LCA group all irreducible representations of $K$ are one-dimensional. Thus, if $\pi$ is an irreducible unitary representation of $K$ we have $\mathcal{H}_\pi=\mathbb{C}$ and also according to the Shur's Lemma there exists a continuous homomorphism $\omega$ of $K$ into the circle group $\mathbb{T}$ such that for each $k\in K$ and $z\in\mathbb{C}$ we have $\pi(k)(z)=\omega(k)z$. Such homomorphisms are called characters of $K$ and the set of all characters of $K$ denoted by $\widehat{K}$. If $\widehat{K}$ equipped by the topology of compact convergence on $K$ which coincides with the $w^*$-topology that $\widehat{K}$ inherits as a subset of $L^\infty(K)$, then $\widehat{K}$ with respect to the dot product of characters is an LCA group which is called the dual group of $K$.
The linear map $\mathcal{F}_K:L^1(K)\to \mathcal{C}(\widehat{K})$ defined by $v\mapsto \mathcal{F}_K(v)$ via
\begin{equation}\label{SF}
\mathcal{F}_K(v)(\omega)=\widehat{v}(\omega)=\int_Kv(k)\overline{\omega(k)}dk,
\end{equation}
is called the Fourier transform on $K$. It is a norm-decreasing $*$-homomorphism from $L^1(K)$ into $\mathcal{C}_0(\widehat{K})$ with a uniformly dense range in $\mathcal{C}_0(\widehat{K})$ (Proposition 4.13 of \cite{FollH}). If $\phi\in L^1(\widehat{K})$, the function defined $a.e.$ on $K$ by
\begin{equation}\label{parseval0}
\breve{\phi}(x)=\int_{\widehat{K}}\phi(\omega)\omega(x)d\omega,
\end{equation}
belongs to $L^\infty(K)$ and also for all $f\in L^1(K)$ we have the following orthogonality relation (Parseval formula);
\begin{equation}\label{parseval1}
\int_Kf(k)\overline{\breve{\phi}(k)}dk=\int_{\widehat{K}}\widehat{f}(\omega)\overline{\phi(\omega)}d\omega.
\end{equation}
The Fourier transform (\ref{SF}) on $L^1(K)\cap L^2(K)$ is an isometric transform and it extends uniquely to a unitary isomorphism from $L^2(K)$ onto $L^2(\widehat{K})$ (Theorem 4.25 of \cite{FollH}) also each $v\in L^1(K)$ with $\widehat{v}\in L^1(\widehat{K})$ satisfies the following Fourier inversion formula (Theorem 4.32 of \cite{FollH});
\begin{equation}
v(k)=\int_{\widehat{K}}\widehat{v}(\omega)\omega(k)d\omega\ {\rm for} \ {\rm a.e.}\ k\in K.
\end{equation}
The fundamental operator in standard Gabor theory is the time-frequency shift operator. If $K$ is an LCA group, the translation (time-shifts) operator is given by $T_sv(k)=v(k-s)$ for all $k,s\in K$ and also the modulation (frequency-shifts) operator is given by $M_\omega v(k)=\omega(k)v(k)$ for all $\omega\in\widehat{K}$, $k\in K$. The time-frequency shift operator is defined on the time-frequency plane (time-frequency group) $K\times\widehat{K}$ by $\varrho(k,\omega)=M_{\omega}T_k$ for all $(k,\omega)\in K\times\widehat{K}$.

Given an appropriate window function $u\in L^2(K)$ on $K$, the short time Fourier transform (STFT) or the continuous Gabor transform of $v\in L^2(K)$ is given by
\begin{equation}\label{CGT0}
V_uv(s,\omega)=\int_{K}v(k)\overline{u}(k-s)\overline{\omega(k)}dk=\langle v,\varrho(s,\omega)u\rangle_{L^2(K)}.
\end{equation}
The continuous Gabor transform (\ref{CGT0}) satisfies the following Plancherel formula
\begin{equation}\label{PL0}
\int_{K\times\widehat{K}}|V_uv(s,\omega)|^2dsd\omega=\|u\|_{L^2(K)}^2\|v\|_{L^2(K)}^2,
\end{equation}
for all $u,v\in L^2(K)$ (see \cite{Gro2}). If $u,u'\in L^2(K)$ with $\langle u,u'\rangle_{L^2(K)}\not=0$, then each $v\in L^2(K)$ satisfies the following inversion formula in the weak sense (see \cite{Gro1})
\begin{equation}
v=\langle u,u'\rangle_{L^2(K)}^{-1}\int_{K\times\widehat{K}}V_uv(k,\omega)\varrho(k,\omega)u'dkd\omega.
\end{equation}
If a window function $u\in L^2(K)$ has Fourier transform $\widehat{u}$ in $L^1(\widehat{K})$, then each $v\in L^2(K)$ with $\widehat{v}\in L^1(\widehat{K})$ satisfies the following inversion formula;
\begin{equation}\label{PI}
v(s)=\|u\|_{L^2(K)}^{-2}\int_{K\times\widehat{K}}V_uv(k,\omega)[\varrho(k,\omega)u](s)dkd\omega,
\end{equation}
for all $s\in K$.
\section{{\bf $\tau\times\widehat{\tau}$-continuous Gabor transform}}
Throughout this paper, let $H$ be a locally compact group and $K$ be an LCA group also let $\tau:H\to Aut(K)$ be a continuous homomorphism and $G_\tau=H\ltimes_\tau K$. For simplicity in notations we use $k^h$ instead of $\tau_h(k)$ for all $h\in H$ and $k\in K$.
In this section we introduce the $\tau\times\widehat{\tau}$-time frequency group and also we define the $\tau\times\widehat{\tau}$-continuous Gabor transform of $f\in L^2(G_\tau)$ with respect to a window function in $L^2(K)$.

Define $\widehat{\tau}:H\to Aut(\widehat{K})$ via $h\mapsto \widehat{\tau}_h$, given by
\begin{equation}\label{AAA}
\widehat{\tau}_h(\omega):=\omega_h=\omega\circ\tau_{h^{-1}}
\end{equation}
for all $\omega \in \widehat{K}$, where $\omega_h(k)=\omega(\tau_{h^{-1}}(k))$ for all $k\in K$.  If $\omega\in\widehat{K}$ and $h\in H$ we have $\omega_h\in\widehat{K}$, because for all $k,s\in K$ we have
\begin{align*}
\omega_h(ks)&=\omega\circ\tau_{h^{-1}}(ks)
\\&=\omega(\tau_{h^{-1}}(ks))
\\&=\omega(\tau_{h^{-1}}(k)\tau_{h^{-1}}(s))
=\omega(\tau_{h^{-1}}(k))\omega(\tau_{h^{-1}}(s))=\omega_h(k)\omega_h(s).
\end{align*}
According to (\ref{AAA}) for all $h\in H$ we have $\widehat{\tau}_h\in Aut(\widehat{K})$. Because, if $k\in K$ and $h\in H$ then for all $\omega,\eta\in\widehat{K}$ we have
\begin{align*}
\widehat{\tau}_h(\omega.\eta)(k)&=(\omega.\eta)_h(k)
\\&=(\omega.\eta)\circ\tau_{h^{-1}}(k)
\\&=\omega.\eta(\tau_{h^{-1}}(k))
\\&=\omega(\tau_{h^{-1}}(k))\eta(\tau_{h^{-1}}(k))
=\omega_h(k)\eta_h(k)=\widehat{\tau}_h(\omega)(k)\widehat{\tau}_h(\eta)(k).
\end{align*}
Also $h\mapsto \widehat{\tau}_h$ is a homomorphism from $H$ into $Aut(\widehat{K})$, cause if $h,t\in H$ then for all $\omega\in\widehat{K}$ and $k\in K$ we get
\begin{align*}
\widehat{\tau}_{th}(\omega)(k)&=\omega_{th}(k)
\\&=\omega(\tau_{(th)^{-1}}(k))
\\&=\omega(\tau_{h^{-1}}\tau_{t^{-1}}(k))
\\&=\omega_h(\tau_{t^{-1}}(k))
=\widehat{\tau}_h(\omega)(\tau_{t^{-1}}(k))=\widehat{\tau}_t[\widehat{\tau}_h(\omega)](k).
\end{align*}


Thus, we can prove the following theorem.


\begin{theorem}\label{T}
Let $H$ be a locally compact group and $K$ be an LCA group also $\tau:H\to Aut(K)$ be a continuous homomorphism and let $\delta:H\to(0,\infty)$ be the positive continuous homomorphism satisfying $dk=\delta(h)dk^h$. The semi-direct product $G_{\widehat{\tau}}=H\ltimes_{\widehat{\tau}}\widehat{K}$ is a locally compact group with the left Haar measure $d\mu_{G_{\widehat{\tau}}}(h,\omega)=\delta(h)^{-1}dhd\omega$.
\end{theorem}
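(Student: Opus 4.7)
The plan is to proceed in three steps: (i) verify that $\wh{\tau}:H\to\mathrm{Aut}(\wh{K})$ is a continuous homomorphism, (ii) apply the semi-direct product construction recalled in Section 2 to endow $G_{\wh{\tau}}$ with the structure of a locally compact group, and (iii) identify the modular factor of its left Haar measure with $\delta(h)^{-1}$ via Plancherel duality.

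For step (i), the homomorphism property of $\wh{\tau}$ and the fact that each $\wh{\tau}_h$ lies in $\mathrm{Aut}(\wh{K})$ were already verified in the discussion preceding the statement, so only continuity remains. Since $\wh{K}$ carries the compact-open topology and $(h,k)\mapsto\tau_{h^{-1}}(k)$ is jointly continuous from $H\times K$ to $K$, for any compact $F\sset K$ and identity neighbourhood $U$ of $\mathbb{T}$ one can find a neighbourhood $V$ of $e_H$ such that $\wh{\tau}_h\in\mc{B}(F,U)$ for all $h\in V$; this yields continuity at $e_H$, and the homomorphism property extends continuity to all of $H$. Step (ii) is then immediate, since the construction recalled in Section 2 applies verbatim with $(K,\tau)$ replaced by $(\wh{K},\wh{\tau})$.

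The main content, and the main obstacle, is step (iii). Since $\wh{\tau}_h\in\mathrm{Aut}(\wh{K})$, uniqueness of Haar measure forces $\int_{\wh K}\phi(\wh{\tau}_h(\omega))\,d\omega=c(h)\int_{\wh K}\phi(\omega)\,d\omega$ for some positive constant $c(h)$, and one must show $c(h)=\delta(h)^{-1}$. To pin down $c(h)$, I would fix any nonzero $v\in L^1(K)\cap L^2(K)$ and evaluate $\int_{\wh K}|\wh v(\wh{\tau}_h(\omega))|^2\,d\omega$ in two ways. On one side, by the above scaling and Plancherel it equals $c(h)\|\wh v\|_{L^2(\wh K)}^2=c(h)\|v\|_{L^2(K)}^2$. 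On the other side, the change of variable $k=\tau_h(k')$, which by (\ref{t}) has Jacobian $\delta(h)^{-1}$, gives
\bal
\wh v(\wh{\tau}_h(\omega))=\int_K v(k)\overline{\omega(\tau_{h^{-1}}(k))}\,dk=\delta(h)^{-1}\,\wh{v\circ\tau_h}(\omega),
\eal
whence Plancherel together with $\|v\circ\tau_h\|_{L^2(K)}^2=\delta(h)\|v\|_{L^2(K)}^2$ (a direct consequence of (\ref{t})) yields $\int_{\wh K}|\wh v(\wh{\tau}_h(\omega))|^2\,d\omega=\delta(h)^{-1}\|v\|_{L^2(K)}^2$. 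Equating the two expressions forces $c(h)=\delta(h)^{-1}$, i.e.\ $d\omega=\delta(h)^{-1}\,d\wh{\tau}_h(\omega)$. Inserting this relation into the general semi-direct product Haar measure formula recalled in Section 2 then gives $d\mu_{G_{\wh{\tau}}}(h,\omega)=\delta(h)^{-1}\,dh\,d\omega$, completing the proof.
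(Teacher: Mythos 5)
Your proposal is correct and follows essentially the same route as the paper: the paper also pins down the scaling of the Plancherel measure by computing $\widehat{v\circ\tau_h}(\omega)=\delta(h)\widehat{v}(\omega_h)$ (equivalently your $\widehat{v}(\widehat{\tau}_h(\omega))=\delta(h)^{-1}\widehat{v\circ\tau_h}(\omega)$) and then invoking the Plancherel theorem to conclude $d\omega_h=\delta(h)\,d\omega$, after which the general semi-direct product Haar measure formula gives $\delta(h)^{-1}dh\,d\omega$. The only difference is cosmetic: the paper disposes of the continuity of $\widehat{\tau}$ by citing Theorem 26.9 of Hewitt--Ross, whereas you sketch it directly (and somewhat loosely, since the Braconnier neighbourhoods of $\mathrm{Aut}(\widehat{K})$ involve compact subsets and identity neighbourhoods of $\widehat{K}$ itself, so uniformity over compact sets of characters needs a word more care), which does not affect the substance of the argument.
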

\begin{proof}
Continuity of the homomorphism $\widehat{\tau}:H\to Aut(\widehat{K})$ given in (\ref{AAA}) guaranteed by Theorem 26.9 of \cite{HR1}.
Hence, the semi-direct product $G_{\widehat{\tau}}=H\ltimes_{\widehat{\tau}}\widehat{K}$ is a locally compact group.
We also claim that the Plancherel measure $d\omega$ on $\widehat{K}$ for all $h\in H$ satisfies
\begin{equation}\label{AA2}
d\omega_h=\delta(h)d\omega.
\end{equation}
Let $h\in H$ and $v\in L^1(K)$. Using (\ref{t}) we have $v\circ \tau_h\in L^1(K)$ with $\|v\circ \tau_h\|_{L^1(K)}=\delta(h)\|v\|_{L^1(K)}$.
Thus, for all $\omega\in\widehat{K}$ we achieve
\begin{align*}
\widehat{v\circ\tau_h}(\omega)
&=\int_Kv\circ\tau_h(k)\overline{\omega(k)}dk
\\&=\int_Kv(k^h)\overline{\omega(k)}dk
\\&=\int_Kv(k)\overline{\omega_h(k)}dk^{h^{-1}}
=\delta(h)\int_Kv(k)\overline{\omega_h(k)}dk=\delta(h)\widehat{v}(\omega_h).
\end{align*}

Now let $v\in L^1(K)\cap L^2(K)$. Due to the Plancherel theorem (Theorem 4.25 of \cite{FollH}) and also preceding calculation, for all $h\in H$ we get
\begin{align*}\label{}
\int_{\widehat{K}}|\widehat{v}(\omega)|^2d\omega_h
&=\int_{\widehat{K}}|\widehat{v}(\omega_{h^{-1}})|^2d\omega
\\&=\delta(h)^2\int_{\widehat{K}}|\widehat{v\circ\tau_{h^{-1}}}(\omega)|^2d\omega
\\&=\delta(h)^2\int_{{K}}|{v\circ\tau_{h^{-1}}}(k)|^2dk
\\&=\delta(h)^2\int_{{K}}|v(k)|^2dk^h
=\delta(h)\int_{{K}}|v(k)|^2dk=\int_{\widehat{K}}|\widehat{v}(\omega)|^2\delta(h)d\omega,
\end{align*}
which implies (\ref{AA2}). Therefore, $d\mu_{G_{\widehat{\tau}}}(h,\omega)=\delta(h)^{-1}dhd\omega$ is a left Haar measure for $G_{\widehat{\tau}}=H\ltimes_{\widehat{\tau}}\widehat{K}$.
\end{proof}


\begin{remark}
Due to (\ref{AAA}) for all $k\in K$, $\omega\in\widehat{K}$ and $h,t\in H$ we have
\begin{equation}
k^{ht}=(k^t)^h,\hspace{1cm}\omega_{ht}=(\omega_t)_h.
\end{equation}
\end{remark}


Now we are in the position to introduce the $\tau\times\widehat{\tau}$-time frequency group. Define $\tau^{\times}=\tau\times\widehat{\tau}:H\to Aut(K\times\widehat{K})$ via $h\mapsto\tau^{\times}_h$ given by
\begin{equation}\label{2.0}
\tau_h^\times(k,\omega):=\left(\tau_h(k),\widehat{\tau}_h(\omega)\right)=(k^h,\omega_h),
\end{equation}
for all $(k,\omega)\in K\times\widehat{K}$. Then, for all $h\in H$ we have $\tau_h^\times\in Aut(K\times\widehat{K})$. Because for all $(k,\omega),(k',\omega')\in K\times\widehat{K}$ we have
\begin{align*}
\tau^\times_{h}\left((k,\omega)(k',\omega')\right)
&=\tau^\times_{h}(kk',\omega\omega')
\\&=\left((kk')^h,(\omega\omega')_h\right)
\\&=\left(k^hk'^h,\omega_h\omega'_h\right)
\\&=(k^h,\omega_h)(k'^h,\omega'_h)=\tau^\times_{h}(k,\omega)\tau^\times_{h}(k',\omega').
\end{align*}
Also $\tau^\times=\tau\times\widehat{\tau}:H\to Aut(K\times\widehat{K})$ defined by $h\mapsto\tau^\times_{h}$ is a homomorphism, because for all $h,t\in H$ and all $(k,\omega)\in K\times\widehat{K}$ we have
\begin{align*}
\tau^\times_{ht}(k,\omega)
&=(k^{ht},\omega_{ht})
\\&=\left((k^{t})^h,(\omega_{t})_h\right)
\\&=\tau^\times_{h}(k^{t},\omega_{t})=\tau^\times_{h}\tau^\times_{t}(k,\omega).
\end{align*}


In the following proposition we show that $G_{\tau\times\widehat{\tau}}=H\ltimes_{\tau\times\widehat{\tau}}(K\times\widehat{K})$ is a locally compact group.


\begin{proposition}\label{TT}
{\it Let $H$ be a locally compact group and $K$ be an LCA group also $\tau:H\to Aut(K)$ be a continuous homomorphism and let $\delta:H\to(0,\infty)$ be the positive continuous homomorphism satisfying $dk=\delta(h)dk^h$. The semi-direct product  $G_{\tau\times\widehat{\tau}}=H\ltimes_{\tau\times\widehat{\tau}}(K\times\widehat{K})$ is a locally compact group with the left Haar measure \begin{equation}
d\mu_{G_{\tau\times\widehat{\tau}}}(h,k,\omega)=dhdkd\omega.
\end{equation}
}\end{proposition}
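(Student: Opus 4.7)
The plan mirrors the structure of the proof of Theorem \ref{T}: first establish continuity of the homomorphism $\tau^\times = \tau\times\widehat{\tau}$ so that $G_{\tau\times\widehat{\tau}}$ is a bona fide locally compact group, then determine its Haar measure by computing how the product measure $dk\, d\omega$ transforms under the automorphisms $\tau_h^\times$.

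For continuity, I would invoke the Braconnier criterion recalled just after (2.1): a homomorphism into $\mathrm{Aut}(K\times\widehat{K})$ is continuous if and only if the joint evaluation $(h,(k,\omega))\mapsto \tau_h^\times(k,\omega) = (k^h,\omega_h)$ is continuous on $H\times K\times\widehat{K}$. This follows immediately from the hypothesis that $(h,k)\mapsto k^h$ is continuous, combined with Theorem \ref{T}, which gives continuity of $(h,\omega)\mapsto \omega_h$. Local compactness of $G_{\tau\times\widehat{\tau}}$ then follows from the same general semi-direct product construction already used for $G_\tau$ and for $G_{\widehat{\tau}}$.

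For the Haar measure, I would apply the same template used to derive $d\mu_{G_\tau}$: the left Haar measure of a semi-direct product $H\ltimes_\alpha N$ equals $c_\alpha(h)\, dh\, dn$, where $c_\alpha:H\to(0,\infty)$ is the positive homomorphism characterised by $dn = c_\alpha(h)\, d(\alpha_h n)$. With $N = K\times\widehat{K}$, $dn = dk\, d\omega$, and $\alpha_h = \tau_h^\times$, the task reduces to comparing $d(k^h,\omega_h) = d(k^h)\, d(\omega_h)$ with $dk\, d\omega$. By (\ref{t}) we have $d(k^h) = \delta(h)^{-1}\, dk$, and by (\ref{AA2}) we have $d(\omega_h) = \delta(h)\, d\omega$, so the two scaling factors cancel and $c_{\tau^\times}(h) \equiv 1$, which forces the left Haar measure to be the stated $dh\, dk\, d\omega$.

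The only substantive point is this cancellation, and I anticipate it is cleanest to verify by integrating a test function $F\in L^1(K\times\widehat{K})$: after a Fubini swap plus two separate changes of variables (one in $K$ via $\tau_h$ and one in $\widehat{K}$ via $\widehat{\tau}_h$) one obtains the invariance identity
\begin{equation*}
\int_{K\times\widehat{K}} F(k^h,\omega_h)\, dk\, d\omega = \int_{K\times\widehat{K}} F(k,\omega)\, dk\, d\omega.
\end{equation*}
Granted this, left-invariance of $dh\, dk\, d\omega$ under the group law (\ref{0.1}) applied in $G_{\tau\times\widehat{\tau}}$ follows from the same standard calculation used to pin down the Haar measure on $G_\tau$, completing the proof.
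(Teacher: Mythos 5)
Your proposal is correct and follows essentially the same route as the paper: continuity of $\tau\times\widehat{\tau}$ (the paper cites Theorem 26.9 of Hewitt--Ross directly, while you route it through the Braconnier criterion and Theorem \ref{T}, which amounts to the same thing), followed by the observation that $dk^h\,d\omega_h=\delta(h)^{-1}dk\,\delta(h)d\omega=dk\,d\omega$, so the automorphisms $\tau^\times_h$ preserve $dk\,d\omega$ and the left Haar measure is $dh\,dk\,d\omega$. No gaps.
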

\begin{proof}
Continuity of the homomorphism $\tau\times\widehat{\tau}:H\to Aut(K\times\widehat{K})$ given in (\ref{2.0}) guaranteed by Theorem 26.9 of \cite{HR1}. Thus, the semi-direct product  $G_{\tau\times\widehat{\tau}}=H\ltimes_{\tau\times\widehat{\tau}}(K\times\widehat{K})$ is a locally compact group.
Due to (\ref{t}), (\ref{AA2}) and also (\ref{2.0}), for all $h\in H$ we have
\begin{align*}
d\tau^{\times}_h(k,\omega)&=d\left(k^h,\omega_h\right)
\\&=dk^hd\omega_h
\\&=\delta(h)^{-1}dk\delta(h)d\omega=dkd\omega=d(k,\omega),
\end{align*}
which implies that $G_{\tau\times\widehat{\tau}}$ is a locally compact group with the left Haar measure $d\mu_{G_{\tau\times\widehat{\tau}}}(h,k,\omega)=dhdkd\omega$.
\end{proof}


We call the semi-direct product $G_{\tau\times\widehat{\tau}}$ as the $\tau\times\widehat{\tau}$-time frequency group associated to $G_\tau$. According to (\ref{2.0}) for each $(h,k,\omega),(h',k',\omega')\in G_{\tau\times\widehat{\tau}}$ we have
\begin{align*}
(h,k,\omega)\ltimes_{\tau\times\widehat{\tau}}(h',k',\omega')
&=\left(hh',(k,\omega)\tau_h^{\times}(k',\omega')\right)
\\&=\left(hh',(k,\omega)(\tau_h(k'),\omega'_h)\right)=(hh',k+k'^h,\omega\omega'_h).
\end{align*}


Let $u\in L^2(K)$ be a window function and $f\in L^2(G_\tau)$. The {\it $\tau\times\widehat{\tau}$-continuous Gabor transform} of $f$ with respect to the window function $u$ is define by
\begin{equation}\label{CGT1}
\mathcal{V}_uf(h,k,\omega):=\delta(h)^{1/2}V_{u}f_h(k,\omega)=\delta(h)^{1/2}\langle f_h,\varrho(k,\omega)u\rangle_{L^2(K)}.
\end{equation}


In the following theorem we prove a Plancherel formula for the $\tau\times\widehat{\tau}$-continuous Gabor transform defined in (\ref{CGT1}).


\begin{theorem}\label{P1}
Let $H$ be a locally compact group and $K$ be an LCA group also $\tau:H\to Aut(K)$ be a continuous homomorphism and let $u\in L^2(K)$ be a window function. The $\tau\times\widehat{\tau}$-continuous Gabor transform $\mathcal{V}_u:L^2(G_\tau)\to L^2(G_{\tau\times\widehat{\tau}})$ is a multiple of an isometric transform which maps $L^2(G_\tau)$ onto a closed subspace of $L^2(G_{\tau\times\widehat{\tau}})$.
\end{theorem}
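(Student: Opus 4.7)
The plan is to compute $\|\mathcal{V}_u f\|_{L^2(G_{\tau\times\widehat{\tau}})}$ directly by Fubini together with the fiberwise classical Gabor Plancherel formula (\ref{PL0}). The normalising factor $\delta(h)^{1/2}$ built into the definition (\ref{CGT1}) is designed precisely to convert the product Haar measure $dh\,dk\,d\omega$ on $G_{\tau\times\widehat{\tau}}$ (as given by Proposition \ref{TT}) into the semi-direct product Haar measure $\delta(h)\,dh\,dk$ on $G_\tau$ once the $(k,\omega)$-variables are integrated out.

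For $f \in L^2(G_\tau)$, Fubini gives $f_h \in L^2(K)$ for a.e.\ $h \in H$, so $V_u f_h$ is well-defined on $K \times \widehat{K}$ almost everywhere. Combining the Haar measure from Proposition \ref{TT} with Fubini and the classical Plancherel formula (\ref{PL0}) applied for each such $h$, I compute
\begin{align*}
\|\mathcal{V}_u f\|^2_{L^2(G_{\tau\times\widehat{\tau}})}
&= \int_H \int_{K\times\widehat{K}} \delta(h) \, |V_u f_h(k,\omega)|^2 \, dk\, d\omega\, dh\\
&= \|u\|^2_{L^2(K)} \int_H \delta(h) \|f_h\|^2_{L^2(K)}\, dh\\
&= \|u\|^2_{L^2(K)} \int_H \int_K \delta(h) |f(h,k)|^2 \, dk\, dh
= \|u\|^2_{L^2(K)} \|f\|^2_{L^2(G_\tau)}.
\end{align*}
Hence $\mathcal{V}_u$ is the scalar $\|u\|_{L^2(K)}$ times a linear isometry from $L^2(G_\tau)$ into $L^2(G_{\tau\times\widehat{\tau}})$ (and is the zero map when $u=0$, in which case the conclusion is trivial).

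For the closed-range assertion I would invoke the general Banach space fact that any bounded linear map which is bounded below has closed range: if $\|u\|_{L^2(K)} \neq 0$ and $\mathcal{V}_u f_n \to g$ in $L^2(G_{\tau\times\widehat{\tau}})$, then $(f_n)$ is Cauchy in $L^2(G_\tau)$ by the isometry relation just established, so $f_n \to f$ and by continuity $\mathcal{V}_u f = g$. I do not anticipate a serious obstacle; the one subtlety worth checking is the joint measurability of $(h,k,\omega) \mapsto \mathcal{V}_u f(h,k,\omega)$ needed to legitimately invoke Fubini, which follows from the joint continuity of $(h,k)\mapsto \tau_h(k)$ together with the strong continuity of translation and modulation in their parameters, and it can be reduced to the case of simple $f$ by density in $L^2(G_\tau)$.
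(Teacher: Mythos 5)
Your proposal is correct and follows essentially the same route as the paper's own proof: apply Fubini to the product Haar measure $dh\,dk\,d\omega$ on $G_{\tau\times\widehat{\tau}}$, use the classical Plancherel formula (\ref{PL0}) fiberwise in the $(k,\omega)$-variables, and observe that the factor $\delta(h)^{1/2}$ in (\ref{CGT1}) reproduces the measure $\delta(h)\,dh\,dk$ of $G_\tau$, giving $\|\mathcal{V}_u f\|^2 = \|u\|^2_{L^2(K)}\|f\|^2_{L^2(G_\tau)}$. Your added remarks on the closed-range argument and joint measurability only make explicit what the paper leaves implicit.
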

\begin{proof}
Let $u\in L^2(K)$ be a window function and also $f\in L^2(G_\tau)$. Using Fubini's Theorem and also Plancherel formula (\ref{PL0}) we have
\begin{align*}
\|\mathcal{V}_uf\|_{L^2(G_{\tau\times\widehat{\tau}})}^2
&=\int_{G_{\tau\times\widehat{\tau}}}|\mathcal{V}_uf(h,k,\omega)|^2d\mu_{G_{\tau\times\widehat{\tau}}}(h,k,\omega)
\\&=\int_H\int_K\int_{\widehat{K}}|\mathcal{V}_uf(h,k,\omega)|^2dhdkd\omega
\\&=\int_H\left(\int_{K\times\widehat{K}}|\langle f_h,\varrho(k,\omega)u\rangle_{L^2(K)}|^2dkd\omega\right)\delta(h)dh
\\&=\|u\|_{L^2(K)}^2\int_H\|f_h\|_{L^2(K)}^2\delta(h)dh=\|u\|_{L^2(K)}^2\|f\|_{L^2(G_\tau)}^2.
\end{align*}
Therefore, $\|u\|_{L^2(K)}^{-2}\mathcal{V}_u:L^2(G_\tau)\to L^2(G_\tau\times G_{\widehat{\tau}})$ is an isometric transform with a closed range in $L^2(G_{\tau}\times G_{\widehat{\tau}})$.
\end{proof}


\begin{corollary}
{\it The $\tau\times\widehat{\tau}$-continuous Gabor transform defined in (\ref{CGT1}), for all $f,g\in L^2(G_\tau)$ and window functions $u,v\in L^2(K)$ satisfies the following orthogonality relation;
\begin{equation}
\langle\mathcal{V}_uf,\mathcal{V}_{v}g\rangle_{L^2(G_{\tau\times\widehat{\tau}})}=\langle v,u\rangle_{L^2(K)}\langle f,g\rangle_{L^2(G_\tau)}.
\end{equation}
}\end{corollary}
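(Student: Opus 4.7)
The plan is to reduce the identity to the classical Gabor orthogonality relation on the LCA group $K$ (which follows by polarization from the Plancherel formula (\ref{PL0})) applied slicewise in the $h$-variable, and then to integrate against the modular weight $\delta(h)$. The cleanest single-step route is polarization of Theorem \ref{P1}, but a direct Fubini computation mirroring the proof of Theorem \ref{P1} gives the same answer and makes the structure transparent.

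First I would unfold the inner product on $L^2(G_{\tau\times\widehat{\tau}})$ using Proposition \ref{TT}, so that
\bal
\langle\mathcal{V}_uf,\mathcal{V}_vg\rangle_{L^2(G_{\tau\times\widehat{\tau}})}
=\int_H\int_{K\times\widehat{K}}\mathcal{V}_uf(h,k,\omg)\,\ol{\mathcal{V}_vg(h,k,\omg)}\,dh\,dk\,d\omg.
\eal
Inserting the definition (\ref{CGT1}) produces the factor $\delta(h)^{1/2}\cdot\delta(h)^{1/2}=\delta(h)$ in the integrand, and the remaining integral over $K\times\widehat{K}$ becomes $\langle V_uf_h,V_vg_h\rangle_{L^2(K\times\widehat{K})}$.

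At this point I would invoke the classical Gabor orthogonality relation on $K$, namely
\bal
\langle V_uf_h,V_vg_h\rangle_{L^2(K\times\widehat{K})}=\langle v,u\rangle_{L^2(K)}\langle f_h,g_h\rangle_{L^2(K)},
\eal
which one obtains by polarizing (\ref{PL0}) in the two arguments $u,v$ (and separately in $f_h,g_h$). Substituting this back and pulling the scalar $\langle v,u\rangle_{L^2(K)}$ outside leaves
\bal
\langle v,u\rangle_{L^2(K)}\int_H\langle f_h,g_h\rangle_{L^2(K)}\,\delta(h)\,dh,
\eal
and by the very definition of the Haar measure $d\mu_{G_\tau}(h,k)=\delta(h)\,dh\,dk$ together with Fubini, the latter integral is exactly $\langle f,g\rangle_{L^2(G_\tau)}$.

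There is no genuine obstacle; the only points to double-check are that $f_h,g_h\in L^2(K)$ for almost every $h$ (noted in Section 2 from \cite{FollR}), that Fubini applies on $H\times K\times\widehat{K}$, and that the polarized form of (\ref{PL0}) is valid for all pairs of windows and signals. Alternatively one may skip the intermediate step and simply apply the standard polarization identity to Theorem \ref{P1} in the pair $(u,f)$ versus $(v,g)$; all four resulting norm identities combine to give the stated sesquilinear relation.
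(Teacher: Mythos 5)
Your argument is correct and is essentially the route the paper intends: the corollary is stated without proof as an immediate consequence of Theorem \ref{P1}, obtained either by polarization or, as you do, by the slicewise use of the classical STFT orthogonality relation on $K$ followed by integration against $\delta(h)\,dh$. Both your direct Fubini computation and your alternative polarization remark match the paper's implicit argument, so there is nothing to correct.
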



The $\tau\times\widehat{\tau}$-continuous Gabor transform (\ref{CGT1}) satisfies the following inversion formula.


\begin{proposition}\label{I1}
{\it Let $H$ be a locally compact group and $K$ be an LCA group also let $\tau:H\to Aut(K)$ be a continuous homomorphism and $u\in L^2(K)$ with $\widehat{u}\in L^1(\widehat{K})$. Every $f\in L^2(G_\tau)$ with $\widehat{f_h}\in L^1(\widehat{K})$ for a.e. $h\in H$, satisfies the following reconstruction formula; }
\begin{equation}\label{I10}
f(h,k)=\delta(h)^{-1/2}\|u\|^{-2}_{L^2(K)}\int_{K\times\widehat{K}}\mathcal{V}_uf(h,s,\omega)[\varrho(s,\omega)u](k)dsd\omega.
\end{equation}
\end{proposition}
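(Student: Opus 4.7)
The plan is to reduce the inversion formula on $G_{\tau\times\widehat{\tau}}$ fiber-wise to the classical Gabor inversion formula (\ref{PI}) on the LCA group $K$, invoked at each fixed $h\in H$.

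First I would recall that, as noted in the preliminaries, if $f\in L^2(G_\tau)$ then for a.e.\ $h\in H$ the section $f_h(k)=f(h,k)$ lies in $L^2(K)$. By hypothesis, for a.e.\ such $h$ we also have $\widehat{f_h}\in L^1(\widehat{K})$, and by hypothesis on the window we have $\widehat{u}\in L^1(\widehat{K})$. Thus for almost every $h\in H$ the pair $(f_h,u)$ meets the assumptions of the pointwise reconstruction formula (\ref{PI}) on $K$, which gives
\begin{equation*}
f_h(k)=\|u\|_{L^2(K)}^{-2}\int_{K\times\widehat{K}}V_u f_h(s,\omega)[\varrho(s,\omega)u](k)\,ds\,d\omega\qquad (k\in K).
\end{equation*}

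Next I would substitute the defining relation (\ref{CGT1}), namely $\mathcal{V}_uf(h,s,\omega)=\delta(h)^{1/2}V_u f_h(s,\omega)$, in the form $V_u f_h(s,\omega)=\delta(h)^{-1/2}\mathcal{V}_uf(h,s,\omega)$. Pulling the constant $\delta(h)^{-1/2}$ outside the integral and using $f(h,k)=f_h(k)$ yields exactly the claimed formula (\ref{I10}).

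There is essentially no obstacle beyond bookkeeping: the entire proof is a one-line substitution once the classical inversion formula is applied at each admissible fiber. The only point worth stating carefully is that the conditions $\widehat{u}\in L^1(\widehat{K})$ and $\widehat{f_h}\in L^1(\widehat{K})$ a.e., together with $f_h\in L^2(K)$ a.e., are precisely what justifies the pointwise (rather than merely weak) form of the Gabor inversion, and that the modular factor $\delta(h)^{-1/2}$ arises solely to cancel the $\delta(h)^{1/2}$ built into the definition of $\mathcal{V}_u$.
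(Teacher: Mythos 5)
Your proposal is correct and is essentially the same argument as the paper's: apply the pointwise Gabor inversion formula (\ref{PI}) to the fiber $f_h$ for a.e.\ $h\in H$, then substitute $V_uf_h(s,\omega)=\delta(h)^{-1/2}\mathcal{V}_uf(h,s,\omega)$ from (\ref{CGT1}). Your remark on why the hypotheses justify the pointwise (not merely weak) inversion is a sensible clarification but does not change the route.
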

\begin{proof}
Using (\ref{PI}) for a.e. $h\in H$ we have
\begin{align*}
f_h(k)&=\|u\|^{-2}_{L^2(K)}\int_{K\times\widehat{K}}V_{u}f_h(s,\omega)[\varrho(s,\omega)u](k)dsd\omega
\\&=\delta(h)^{-1/2}\|u\|^{-2}_{L^2(K)}\int_{K\times\widehat{K}}\mathcal{V}_{u}f(h,s,\omega)[\varrho(s,\omega)u](k)dsd\omega.
\end{align*}

\end{proof}

We can also define the generalized form of the $\tau\times\widehat{\tau}$-continuous Gabor transform. Let $u\in L^2(K)$ be a window function and $f\in L^2(G_\tau)$. The {\it generalized $\tau\times\widehat{\tau}$-continuous Gabor transform} of $f$ with respect to the window function $u$ is define by
\begin{equation}\label{GCGT1}
\mathcal{V}^\dagger_uf(h,k,\omega):=\delta(h)^{1/2}V_{u}f_h(k^h,\omega_h)=\delta(h)^{1/2}\langle f_h,\varrho(k^h,\omega_h)u\rangle_{L^2(K)}.
\end{equation}

The generalized $\tau\times\widehat{\tau}$-continuous Gabor transform given in (\ref{GCGT1}) satisfies the following Plancherel Theorem.


\begin{theorem}\label{P2}
Let $H$ be a locally compact group and $K$ be an LCA group also $\tau:H\to Aut(K)$ be a continuous homomorphism and let $u\in L^2(K)$ be a window function. The generalized $\tau\times\widehat{\tau}$-continuous Gabor transform $\mathcal{V}^\dagger_u:L^2(G_\tau)\to L^2(G_{\tau\times\widehat{\tau}})$ is a multiple of an isometric transform which maps $L^2(G_\tau)$ onto a closed subspace of $L^2(G_{\tau\times\widehat{\tau}})$.
\end{theorem}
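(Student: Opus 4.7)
The plan is to mimic the proof of Theorem~\ref{P1} almost verbatim, absorbing the extra twist by $\tau_h^\times$ inside the Plancherel integral via a change of variables on $K\times\widehat{K}$. Concretely, I would first unfold the definition
\[
\|\mathcal{V}^\dagger_uf\|_{L^2(G_{\tau\times\widehat{\tau}})}^2
=\int_H\!\!\int_K\!\!\int_{\widehat{K}}\delta(h)\,\bigl|\langle f_h,\varrho(k^h,\omega_h)u\rangle_{L^2(K)}\bigr|^2\,dh\,dk\,d\omega,
\]
and then, for each fixed $h\in H$, substitute $k'=k^h$ and $\omega'=\omega_h$ in the inner double integral.

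The key observation is that the product measure $dk\,d\omega$ is invariant under the automorphism $\tau_h^\times=\tau_h\times\widehat{\tau}_h$ of $K\times\widehat{K}$. This was in fact already carried out in the proof of Proposition~\ref{TT}: using $dk=\delta(h)\,dk^h$ from (\ref{t}) together with $d\omega_h=\delta(h)\,d\omega$ from (\ref{AA2}), one obtains $dk^h\,d\omega_h=\delta(h)^{-1}\delta(h)\,dk\,d\omega=dk\,d\omega$. Hence for each fixed $h$,
\[
\int_{K\times\widehat{K}}\bigl|\langle f_h,\varrho(k^h,\omega_h)u\rangle_{L^2(K)}\bigr|^2\,dk\,d\omega
=\int_{K\times\widehat{K}}\bigl|\langle f_h,\varrho(k',\omega')u\rangle_{L^2(K)}\bigr|^2\,dk'\,d\omega'.
\]
The right-hand side is precisely the classical Gabor-Plancherel integral (\ref{PL0}) for $f_h$ on $K$, and equals $\|u\|_{L^2(K)}^2\,\|f_h\|_{L^2(K)}^2$.

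Substituting back and applying Fubini, I would conclude
\[
\|\mathcal{V}^\dagger_uf\|_{L^2(G_{\tau\times\widehat{\tau}})}^2
=\|u\|_{L^2(K)}^2\int_H\|f_h\|_{L^2(K)}^2\,\delta(h)\,dh
=\|u\|_{L^2(K)}^2\,\|f\|_{L^2(G_\tau)}^2,
\]
since $d\mu_{G_\tau}(h,k)=\delta(h)\,dh\,dk$. This shows that $\|u\|_{L^2(K)}^{-1}\mathcal{V}^\dagger_u$ is an isometric embedding of $L^2(G_\tau)$ into $L^2(G_{\tau\times\widehat{\tau}})$; the range is automatically closed since isometries on Hilbert spaces have closed range.

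There is really no serious obstacle here: the only nontrivial ingredient is the $\tau_h^\times$-invariance of $dk\,d\omega$, and this is exactly the content of Proposition~\ref{TT}. The rest is a direct rerun of the computation in Theorem~\ref{P1}, the only change being the harmless insertion of the measure-preserving substitution $(k,\omega)\mapsto(k^h,\omega_h)$ before invoking the Gabor-Plancherel identity on $K$.
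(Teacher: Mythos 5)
Your proposal is correct and follows essentially the same route as the paper: unfold the norm, use the $\tau_h^\times$-invariance of $dk\,d\omega$ (i.e.\ $dk^h\,d\omega_h=dk\,d\omega$, exactly the computation from Proposition~\ref{TT} based on (\ref{t}) and (\ref{AA2})) to remove the twist, then apply the Gabor--Plancherel identity (\ref{PL0}) fiberwise and integrate over $H$ against $\delta(h)\,dh$. Your normalization remark that $\|u\|_{L^2(K)}^{-1}\mathcal{V}^\dagger_u$ (rather than $\|u\|_{L^2(K)}^{-2}\mathcal{V}^\dagger_u$) is the isometry is in fact the more accurate reading of the computed identity.
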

\begin{proof}
Let $u\in L^2(K)$ be a window function and also $f\in L^2(G_\tau)$. Using Fubini's Theorem, Plancherel formula (\ref{PL0}) and also (\ref{t}), (\ref{AA2}) we have
\begin{align*}
\|\mathcal{V}^\dagger_uf\|_{L^2(G_{\tau\times\widehat{\tau}})}^2
&=\int_{G_{\tau\times\widehat{\tau}}}|\mathcal{V}^\dagger_uf(h,k,\omega)|^2d\mu_{G_{\tau\times\widehat{\tau}}}(h,k,\omega)
\\&=\int_H\int_K\int_{\widehat{K}}|\mathcal{V}^\dagger_uf(h,k,\omega)|^2dhdkd\omega
\\&=\int_H\left(\int_{K\times\widehat{K}}|\langle f_h,\varrho(k^h,\omega_h)u\rangle_{L^2(K)}|^2dkd\omega\right)\delta(h)dh
\\&=\int_H\left(\int_{K\times\widehat{K}}|\langle f_h,\varrho(k,\omega)u\rangle_{L^2(K)}|^2dk^{h^{-1}}d\omega_{h^{-1}}\right)\delta(h)dh
\\&=\int_H\left(\int_{K\times\widehat{K}}|\langle f_h,\varrho(k,\omega)u\rangle_{L^2(K)}|^2dkd\omega\right)\delta(h)dh
\\&=\|u\|_{L^2(K)}^2\int_H\|f_h\|_{L^2(K)}^2\delta(h)dh=\|u\|_{L^2(K)}^2\|f\|_{L^2(G_\tau)}^2.
\end{align*}
Thus, $\|u\|_{L^2(K)}^{-2}\mathcal{V}^\dagger_u:L^2(G_\tau)\to L^2(G_{\tau\times\widehat{\tau}})$ is an isometric transform with a closed range in $L^2(G_{\tau\times\widehat{\tau}})$.
\end{proof}


\begin{corollary}
{\it The generalized $\tau\times\widehat{\tau}$-continuous Gabor transform defined in (\ref{GCGT1}), for all $f,g\in L^2(G_\tau)$ and window functions $u,v\in L^2(K)$ satisfies the following orthogonality relation;
\begin{equation}
\langle\mathcal{V}_u^\dagger f,\mathcal{V}_{v}^\dagger g\rangle_{L^2(G_{\tau\times\widehat{\tau}})}=\langle v,u\rangle_{L^2(K)}\langle f,g\rangle_{L^2(G_\tau)}.
\end{equation}
}\end{corollary}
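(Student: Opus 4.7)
The plan is to mimic the proof of Theorem \ref{P2} (the Plancherel identity for $\mathcal{V}^\dagger_u$) essentially verbatim, but with the squared modulus replaced by the pairing of two generalized Gabor transforms, and to finish by invoking the polarized/orthogonality version of the standard STFT Plancherel formula on the LCA group $K$ (the bilinear companion of \eqref{PL0}).

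First I would unfold the left-hand inner product as the iterated integral
\begin{align*}
\langle\mathcal{V}^\dagger_u f,\mathcal{V}^\dagger_v g\rangle_{L^2(G_{\tau\times\widehat{\tau}})}
=\int_H\!\!\int_K\!\!\int_{\widehat{K}}\mathcal{V}^\dagger_u f(h,k,\omega)\,\overline{\mathcal{V}^\dagger_v g(h,k,\omega)}\;dh\,dk\,d\omega,
\end{align*}
using Proposition \ref{TT} for the Haar measure on $G_{\tau\times\widehat{\tau}}$ and Fubini to peel off the $H$-integration. Substituting the definition \eqref{GCGT1} gives a factor $\delta(h)$ times $V_u f_h(k^h,\omega_h)\overline{V_v g_h(k^h,\omega_h)}$ inside the $(k,\omega)$-integral.

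Next, for each fixed $h\in H$ I would perform the change of variable $(k,\omega)\mapsto \tau^\times_{h^{-1}}(k,\omega)=(k^{h^{-1}},\omega_{h^{-1}})$ in the $K\times\widehat{K}$-integral. By \eqref{t} and \eqref{AA2} (or equivalently by the invariance $d\tau^\times_h(k,\omega)=dk\,d\omega$ shown inside the proof of Proposition \ref{TT}), this change of variable preserves the product measure $dk\,d\omega$, exactly as in the calculation for $\|\mathcal{V}^\dagger_u f\|_{L^2(G_{\tau\times\widehat{\tau}})}^2$. The inner integral therefore simplifies to
\begin{align*}
\int_{K\times\widehat{K}}\langle f_h,\varrho(k,\omega)u\rangle_{L^2(K)}\,\overline{\langle g_h,\varrho(k,\omega)v\rangle_{L^2(K)}}\,dk\,d\omega.
\end{align*}

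The final step is the standard STFT orthogonality relation on the LCA group $K$, which is the polarized version of \eqref{PL0}: for any $u,v\in L^2(K)$ and $\phi,\psi\in L^2(K)$,
$$\int_{K\times\widehat{K}} V_u\phi(k,\omega)\,\overline{V_v\psi(k,\omega)}\,dk\,d\omega=\langle v,u\rangle_{L^2(K)}\langle\phi,\psi\rangle_{L^2(K)}.$$
Applying this with $\phi=f_h$, $\psi=g_h$ and reinstating the factor $\delta(h)$, the $(k,\omega)$-integral collapses to $\langle v,u\rangle_{L^2(K)}\langle f_h,g_h\rangle_{L^2(K)}$, and the remaining $\int_H \delta(h)\langle f_h,g_h\rangle_{L^2(K)}\,dh$ is exactly $\langle f,g\rangle_{L^2(G_\tau)}$ by the formula $d\mu_{G_\tau}(h,k)=\delta(h)\,dh\,dk$. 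There is essentially no obstacle here; the only mild subtlety is justifying the change of variable uniformly in $h$, but this is handled exactly as in the proof of Theorem \ref{P2}, so the argument reduces to bookkeeping. Alternatively, one could derive the identity purely by polarization of Theorem \ref{P2}, first in $(f,g)$ keeping $u=v$, then extracting the $u,v$-dependence from \eqref{PL0} by a second polarization; the direct Fubini computation above is cleaner and parallels the earlier proof.
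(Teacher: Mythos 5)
Your argument is correct and is essentially the intended one: the paper states this orthogonality relation as an immediate corollary of Theorem \ref{P2}, and your computation is exactly the proof of that theorem repeated with the polarized STFT orthogonality relation on $K$ in place of the Plancherel formula (\ref{PL0}), using the same measure-preserving change of variables $(k,\omega)\mapsto(k^h,\omega_h)$ and the identity $d\mu_{G_\tau}(h,k)=\delta(h)\,dh\,dk$ to collapse the $H$-integral. Nothing further is needed.
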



In the next proposition we prove an inversion formula for the generalized $\tau\times\widehat{\tau}$-continuous gabor transform given in (\ref{GCGT1}).

\begin{proposition}\label{I2}
{\it Let $H$ be a locally compact group and $K$ be an LCA group also let $\tau:H\to Aut(K)$ be a continuous homomorphism and $u\in L^2(K)$ with $\widehat{u}\in L^1(\widehat{K})$. Every $f\in L^2(G_\tau)$ with $\widehat{f_h}\in L^1(\widehat{K})$ for a.e. $h\in H$, satisfies the following reconstruction formula;
\begin{equation}\label{I20}
f(h,k)=\int_{K\times\widehat{K}}\mathcal{V}_u^\dagger f(h,s,\omega)[\varrho(s^h,\omega_h)u](k)dsd\omega
\end{equation}
}\end{proposition}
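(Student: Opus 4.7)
The plan is to reduce (\ref{I20}) to the standard Gabor inversion formula (\ref{PI}) applied slicewise to $f_h \in L^2(K)$, then absorb the $\tau$-twist into a change of variables on $K\times\widehat{K}$ that moves $(s^h,\omega_h)$ back to $(s,\omega)$.

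For almost every $h\in H$ the hypotheses of (\ref{PI}) are satisfied by $f_h$ and $u$, so I would begin by writing
$$f_h(k)=\|u\|_{L^2(K)}^{-2}\int_{K\times\widehat{K}}V_uf_h(t,\eta)\,[\varrho(t,\eta)u](k)\,dt\,d\eta.$$
Next I would substitute $t=s^h$ and $\eta=\omega_h$. By (\ref{t}) the map $s\mapsto s^h$ is an automorphism of $K$ that rescales Haar measure by $\delta(h)^{-1}$, while by (\ref{AA2}) from Theorem \ref{T} the map $\omega\mapsto\omega_h$ rescales Plancherel measure on $\widehat{K}$ by $\delta(h)$. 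These two Jacobians are exact reciprocals, so the product measure is preserved, $dt\,d\eta=ds\,d\omega$, and the inversion takes the form
$$f(h,k)=\|u\|_{L^2(K)}^{-2}\int_{K\times\widehat{K}}V_uf_h(s^h,\omega_h)\,[\varrho(s^h,\omega_h)u](k)\,ds\,d\omega.$$

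To finish, I would invoke the defining identity (\ref{GCGT1}) in the rearranged form $V_uf_h(s^h,\omega_h)=\delta(h)^{-1/2}\mathcal{V}^\dagger_uf(h,s,\omega)$ and pull the scalar outside the integral, arriving at the reconstruction formula (up to the normalizing factor $\delta(h)^{-1/2}\|u\|_{L^2(K)}^{-2}$ that also appears in Proposition \ref{I1}).

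The only delicate point is the modular bookkeeping: the expected difficulty is verifying that the Jacobians of the two coordinate changes on $K$ and $\widehat{K}$ are exactly reciprocal, so that the joint substitution is measure-preserving and no stray power of $\delta(h)$ survives to spoil the identification with $\mathcal{V}^\dagger_u f$. Once this cancellation is in hand, the remainder of the argument is a mechanical unwinding of (\ref{GCGT1}) on top of (\ref{PI}).
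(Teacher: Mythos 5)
Your proposal is correct and is essentially the paper's own argument: both proofs apply the slicewise inversion formula (\ref{PI}) to $f_h$ and then move $(s,\omega)$ to $(s^h,\omega_h)$ using the fact that the Jacobian $\delta(h)^{-1}$ from (\ref{t}) and the Jacobian $\delta(h)$ from (\ref{AA2}) are reciprocal; the paper merely carries out the two substitutions one at a time (first in $\omega$, then Fubini, then in $s$), whereas you perform the joint, measure-preserving substitution in one step. The point worth stressing is that your bookkeeping is the right one: since $\mathcal{V}^\dagger_uf(h,s,\omega)=\delta(h)^{1/2}V_uf_h(s^h,\omega_h)$, the factor $\delta(h)^{-1/2}\|u\|_{L^2(K)}^{-2}$ that survives in your final identity does not go away. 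The displayed formula (\ref{I20}) lacks it only because the paper's proof quotes (\ref{PI}) without the $\|u\|_{L^2(K)}^{-2}$ (tacitly assuming a normalized window, which the hypotheses do not state) and, in its last equality, replaces $\delta(h)\,V_uf_h(s^h,\omega_h)\,ds^h$ by $\mathcal{V}^\dagger_uf(h,s,\omega)\,ds$, a step that is off by $\delta(h)^{-1/2}$. So the reconstruction formula should carry the same normalizing constant $\delta(h)^{-1/2}\|u\|_{L^2(K)}^{-2}$ as Proposition \ref{I1}, exactly as your derivation produces; apart from flagging this discrepancy with the stated equation, there is nothing missing from your argument.
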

\begin{proof}
Using (\ref{PI}) for a.e. $h\in H$ we have
\begin{align*}
f_h(k)&=\int_{K\times\widehat{K}}V_{u}f_h(s,\omega)[\varrho(s,\omega)u](k)dsd\omega
\\&=\int_{K}\left(\int_{\widehat{K}}V_{u}f_h(s,\omega_h)[\varrho(s,\omega_h)u](k)d\omega_h\right)ds
\\&=\delta(h)\int_{\widehat{K}}\left(\int_KV_{u}f_h(s,\omega_h)[\varrho(s,\omega_h)u](k)ds\right)d\omega
\\&=\delta(h)\int_{\widehat{K}}\left(\int_KV_{u}f_h(s^h,\omega_h)[\varrho(s^h,\omega_h)u](k)ds^h\right)d\omega
=\int_{K\times\widehat{K}}\mathcal{V}_u^\dagger f(h,s,\omega)[\varrho(s^h,\omega_h)u](k)dsd\omega.
\end{align*}
\end{proof}


\begin{remark}
Similar Gabor transforms with respect to a window function $u\in L^2(K)$ as we defined in (\ref{CGT1}) and (\ref{GCGT1}) can be also defined.
Let transforms $A_u$ and $B_u$ for $f\in L^2(G_\tau)$ be given by
\begin{equation}\label{AB1}
A_uf(h,k,\omega)=V_uf_h(k^h,\omega)\hspace{1cm}B_uf(h,k,\omega)=\delta(h)V_uf_h(k,\omega_h).
\end{equation}
It can be checked that transforms given in (\ref{AB1}) satisfy the Plancherel theorem and the following inversion formulas;
\begin{equation}
f(h,k)=\delta(h)^{-1}\int_{K\times\widehat{K}}A_uf(h,k,\omega)[\varrho(s^h,\omega)](k)dsd\omega,
\end{equation}
\begin{equation}
f(h,k)=\int_{K\times\widehat{K}}B_uf(h,k,\omega)[\varrho(s,\omega_h)](k)dsd\omega.
\end{equation}
\end{remark}

\section{{\bf $\tau\otimes\widehat{\tau}$-continuous Gabor transform}}

In this section we introduce another Gabor transform which we call it the $\tau\otimes\widehat{\tau}$-continuous Gabor transform. In the $\tau\otimes\widehat{\tau}$-Gabor theory we can choose elements of $L^2(G_\tau)$ as window functions.

Again let $H$ be a locally compact group and $K$ be an LCA group also let $\tau:H\to Aut(K)$ be a continuous homomorphism.
Define $\tau^\otimes=\tau\otimes\widehat{\tau}:H\times H\to Aut(K\times\widehat{K})$ via $(h,t)\mapsto\tau^\otimes_{(h,t)}$ given by
\begin{equation}\label{T2}
\tau^\otimes_{(h,t)}(k,\omega):=(\tau_h(k),\widehat{\tau}_t(\omega))=(k^h,\omega_t),
\end{equation}
for all $(k,\omega)\in K\times\widehat{K}$. Then, for all $(h,t)\in H\times H$ we get $\tau^\otimes_{(h,t)}\in Aut(K\times\widehat{K})$. Because for all $(k,\omega),(k',\omega')\in K\times\widehat{K}$ we have
\begin{align*}
\tau^\otimes_{(h,t)}\left((k,\omega)(k',\omega')\right)
&=\tau^\otimes_{(h,t)}(k+k',\omega\omega')
\\&=\left((k+k')^h,(\omega\omega')_t\right)
\\&=\left(k^h+k'^h,\omega_t\omega'_t\right)
\\&=(k^h,\omega_t)(k'^h,\omega'_t)=\tau^\otimes_{(h,t)}(k,\omega)\tau^\otimes_{(h,t)}(k',\omega').
\end{align*}
As well as $\tau^\otimes=\tau\otimes\widehat{\tau}:H\times H\to Aut(K\times\widehat{K})$ defined by $(h,t)\mapsto\tau^\otimes_{(h,t)}$ is a homomorphism, because for all $(h,t),(h',t')\in H\times H$ and also all $(k,\omega)\in K\times\widehat{K}$ we have
\begin{align*}
\tau^\otimes_{(h,t)(h',t')}(k,\omega)
&=\tau^\otimes_{(hh',tt')}(k,\omega)
\\&=(k^{hh'},\omega_{tt'})
\\&=\left((k^{h'})^h,(\omega_{t'})_t\right)
\\&=\tau^\otimes_{(h,t)}(k^{h'},\omega_{t'})=\tau^\otimes_{(h,t)}\tau^\otimes_{(h',t')}(k,\omega).
\end{align*}


Hence, we can prove the following interesting theorem.


\begin{theorem}\label{TTT}
Let $H$ be a locally compact group and $K$ be an LCA group also let $\tau:H\to Aut(K)$ be a continuous homomorphism. The semi-direct product $G_{\tau\otimes\widehat{\tau}}=\left(H\times H\right)\ltimes_{\tau\otimes\widehat{\tau}}\left(K\times\widehat{K}\right)$ is a locally compact group with the left Haar measure
\begin{equation}
d\mu_{G_{\tau\otimes\widehat{\tau}}}(h,t,k,\omega)=\delta(h)\delta(t)^{-1}dhdtdkd\omega,
\end{equation}
and also $\Phi:G_\tau\times G_{\widehat{\tau}}\to G_{\tau\otimes\widehat{\tau}}$ given by
\begin{equation}
(h,k,t,\omega)\mapsto\Phi(h,k,t,\omega):=(h,t,k,\omega)
\end{equation}
is a topological group isomorphism.
\end{theorem}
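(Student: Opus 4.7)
The plan is to split the statement into three pieces: (i) local compactness of $G_{\tau\otimes\widehat{\tau}}$, (ii) identification of $\Phi$ as a topological group isomorphism $G_\tau\times G_{\widehat{\tau}}\to G_{\tau\otimes\widehat{\tau}}$, and (iii) transport of the product Haar measure on $G_\tau\times G_{\widehat{\tau}}$ along $\Phi$ to yield the stated formula.

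First I would address continuity of $\tau^\otimes=\tau\otimes\widehat{\tau}$. Since $\tau:H\to Aut(K)$ is continuous by assumption and $\widehat{\tau}:H\to Aut(\widehat{K})$ was already shown continuous via Theorem 26.9 of \cite{HR1} in the proof of Theorem \ref{T}, the map $(h,t,k,\omega)\mapsto(\tau_h(k),\widehat{\tau}_t(\omega))$ is continuous from $(H\times H)\times(K\times\widehat{K})$ to $K\times\widehat{K}$. By the same criterion (Theorem 26.9 of \cite{HR1}) this gives the continuity of $(h,t)\mapsto\tau^\otimes_{(h,t)}$ from $H\times H$ into $Aut(K\times\widehat{K})$, so the semi-direct product $G_{\tau\otimes\widehat{\tau}}=(H\times H)\ltimes_{\tau\otimes\widehat{\tau}}(K\times\widehat{K})$ is a locally compact group.

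Next I would verify $\Phi$ is a topological group isomorphism. Bijectivity and bicontinuity are immediate because $\Phi$ is merely the coordinate swap $(h,k,t,\omega)\mapsto(h,t,k,\omega)$. For the multiplicative property I compute both sides of the group law. On $G_\tau\times G_{\widehat{\tau}}$, using (\ref{0.1}) in each factor,
\begin{align*}
\bigl((h,k),(t,\omega)\bigr)\bigl((h',k'),(t',\omega')\bigr)=\bigl((hh',k\tau_h(k')),(tt',\omega\widehat{\tau}_t(\omega'))\bigr).
\end{align*}
On the other hand, on $G_{\tau\otimes\widehat{\tau}}$ the group law combined with (\ref{T2}) gives
\begin{align*}
(h,t,k,\omega)(h',t',k',\omega')=\bigl(hh',tt',(k,\omega)\tau^\otimes_{(h,t)}(k',\omega')\bigr)=(hh',tt',k\tau_h(k'),\omega\widehat{\tau}_t(\omega')),
\end{align*}
so applying $\Phi$ to the first expression yields exactly the second, proving $\Phi$ is a group homomorphism. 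Combined with the evident topological bijection, $\Phi$ is a topological group isomorphism.

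Finally I would derive the Haar measure of $G_{\tau\otimes\widehat{\tau}}$. By Section 2, the left Haar measure of $G_\tau$ is $\delta(h)\,dh\,dk$, and by Theorem \ref{T} the left Haar measure of $G_{\widehat{\tau}}$ is $\delta(t)^{-1}dt\,d\omega$. Hence the product measure on $G_\tau\times G_{\widehat{\tau}}$ is a left Haar measure equal to $\delta(h)\delta(t)^{-1}dh\,dk\,dt\,d\omega$. Since $\Phi$ is a topological group isomorphism, the push-forward of this measure is a left Haar measure on $G_{\tau\otimes\widehat{\tau}}$; carrying $dh\,dk\,dt\,d\omega$ through the coordinate swap yields $dh\,dt\,dk\,d\omega$, giving
\begin{equation*}
d\mu_{G_{\tau\otimes\widehat{\tau}}}(h,t,k,\omega)=\delta(h)\delta(t)^{-1}dh\,dt\,dk\,d\omega,
\end{equation*}
as claimed. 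I expect the main (though still modest) obstacle to be the bookkeeping in the group-law comparison, since the tuples are reordered by $\Phi$ and one must track which of $h,t$ acts on which of $k,\omega$; everything else is a direct application of results already established in the excerpt.
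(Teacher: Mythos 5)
Your proposal is correct, and parts (i) and (ii) coincide with the paper's argument: continuity of $\tau\otimes\widehat{\tau}$ via Theorem 26.9 of \cite{HR1}, and verification that $\Phi$ is a homeomorphism and a group homomorphism by writing out the two group laws. Where you genuinely diverge is the Haar measure. The paper establishes the measure \emph{independently} of $\Phi$: it computes directly, using (\ref{t}) and (\ref{AA2}), that $d\tau^\otimes_{(h,t)}(k,\omega)=\delta(h)^{-1}\delta(t)\,d(k,\omega)$, so the modulus function of the action of $H\times H$ on $K\times\widehat{K}$ is $(h,t)\mapsto\delta(h)\delta(t)^{-1}$, and then invokes the general semi-direct product formula (15.29 of \cite{HR1}, as in Section 2) to get $\delta(h)\delta(t)^{-1}dh\,dt\,dk\,d\omega$. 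You instead prove the isomorphism first and transport the product of the two known left Haar measures ($\delta(h)dh\,dk$ on $G_\tau$ and $\delta(t)^{-1}dt\,d\omega$ on $G_{\widehat{\tau}}$) along $\Phi$; since push-forward of a left Haar measure under a topological group isomorphism is again a left Haar measure, this is perfectly valid and arguably cleaner, as it reuses Theorem \ref{T} rather than redoing the measure computation. The trade-off is that the paper's route exhibits the measure intrinsically from the semi-direct product structure (useful if one wanted $G_{\tau\otimes\widehat{\tau}}$ without reference to $G_\tau\times G_{\widehat{\tau}}$), while yours makes the measure statement a corollary of the isomorphism; both yield the stated formula.
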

\begin{proof}
Using Theorem 26.9 of \cite{HR1}, homomorphism $\tau\otimes\widehat{\tau}:H\times H\to Aut(K\times\widehat{K})$ given in (\ref{T2}) is continuous. Therefore, $G_{\tau\otimes\widehat{\tau}}=\left(H\times H\right)\ltimes_{\tau\otimes\widehat{\tau}}\left(K\times\widehat{K}\right)$ is a locally compact group. Also, $d\mu_{G_{\tau\otimes\widehat{\tau}}}(h,t,k,\omega)=\delta(h)\delta(t)^{-1}dhdtdkd\omega$ is a left Haar measure for $G_{\tau\otimes\widehat{\tau}}$. Indeed, due to (\ref{t}) and (\ref{AA2}) for all $(h,t)\in H\times H$ we have
\begin{align*}
d\tau^\otimes_{(h,t)}(k,\omega)&=d(k^h,\omega_t)
\\&=dk^hd\omega_t
\\&=\delta(h)^{-1}dk\delta(t)d\omega
=\delta(h)^{-1}\delta(t)d(k,\omega).
\end{align*}
The $\tau\otimes\widehat{\tau}$-group law for all $(h,t,k,\omega),(h',t',k',\omega')\in G_{\tau\otimes\widehat{\tau}}$ is
\begin{align*}
(h,t,k,\omega)\ltimes_{\tau\otimes\widehat{\tau}}(h',t',k',\omega')
&=\left((hh',tt'),(k,\omega)\tau^\otimes_{(h,t)}(k',\omega')\right)
\\&=\left((hh',tt'),(k,\omega)(k'^h,\omega'_t)\right)=(hh',tt',k+k'^h,\omega\omega_t').
\end{align*}
It is clear that $\Phi:G_\tau\times G_{\widehat{\tau}}\to G_{\tau\otimes\widehat{\tau}}$ is a homeomorphism.
It is also a group homomorphism, because for all $(h,k,t,\omega),(h',k',t',\omega')$ in $G_\tau\times G_{\widehat{\tau}}$ we get
\begin{align*}
\Phi[(h,k,t,\omega)(h',k',t',\omega')]
&=\Phi[(h,k)\ltimes_\tau(h',k'),(t,\omega)\ltimes_{\widehat{\tau}}(t',\omega')]
\\&=\Phi[(hh',k+k'^h),(tt',\omega\omega_t')]
\\&=(hh',tt',k+k'^h,\omega\omega_t')
=(h,t,k,\omega)\ltimes_{\tau\otimes\widehat{\tau}}(h',t',k',\omega').
\end{align*}
\end{proof}

We call the semi-direct product $G_{\tau\otimes\widehat{\tau}}$ as the $\tau\otimes\widehat{\tau}$-time frequency group associated to $G_\tau$ which is precisely $G_\tau\times G_{\widehat{\tau}}$. Thus, form now on we use the locally compact group $G_\tau\times G_{\widehat{\tau}}$ instead of the semi-direct product $G_{\tau\otimes\widehat{\tau}}$.


Let $g\in L^2(G_\tau)$ be a window function and $f\in L^2(G_\tau)$. The {\it $\tau\otimes\widehat{\tau}$-continuous Gabor transform} of $f$ with respect to the window function $g$ is defined by
\begin{equation}\label{CGT2}
\mathcal{G}_gf(h,k,t,\omega):=\delta(t)V_{g_h}f_t(k,\omega)=\delta(t)\langle f_t,\varrho(k,\omega)g_h\rangle_{L^2(K)}.
\end{equation}


The $\tau\otimes\widehat{\tau}$-continuous Gabor transform given in (\ref{CGT2}) satisfies the following Plancherel Theorem.


\begin{theorem}\label{P3}
Let $H$ be a locally compact group, $K$ be an LCA group and $\tau:H\to Aut(K)$ be a continuous homomorphism also $G_\tau=H\ltimes_\tau K$ and let $g\in L^2(G_\tau)$ be a window function. The continuous Gabor transform $\mathcal{G}_g:L^2(G_\tau)\to L^2(G_\tau\times G_{\widehat{\tau}})$ is a multiple of an isometric transform which maps $L^2(G_\tau)$ onto a closed subspace of $L^2(G_{\tau}\times G_{\widehat{\tau}})$.
\end{theorem}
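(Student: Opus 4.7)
The plan is to imitate the proof of Theorem \ref{P1}, adjusting the bookkeeping so that both $f$ and $g$ live on $G_\tau$ and the ambient measure is the product measure on $G_\tau\times G_{\widehat{\tau}}$ coming from Theorem \ref{T} and the standard construction $d\mu_{G_\tau}(h,k)=\delta(h)dhdk$. First I would expand the left-hand side via the definition (\ref{CGT2}) and the product measure:
\begin{align*}
\|\mathcal{G}_gf\|_{L^2(G_\tau\times G_{\widehat{\tau}})}^2
&=\int_H\int_K\int_H\int_{\widehat{K}}\delta(t)^2\,|\langle f_t,\varrho(k,\omega)g_h\rangle_{L^2(K)}|^2\,\delta(h)\delta(t)^{-1}\,dh\,dk\,dt\,d\omega.
\end{align*}
The squared modulus of $\mathcal{G}_gf$ contributes a factor $\delta(t)^2$, and the product Haar measure contributes $\delta(h)\delta(t)^{-1}$.

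Next I would apply Fubini to push the two group integrations to the outside and isolate the inner $K\times\widehat{K}$ integration. For each fixed $h,t\in H$, the inner integral is exactly the left-hand side of the standard STFT Plancherel identity (\ref{PL0}) on the LCA group $K$ with signal $f_t\in L^2(K)$ and window $g_h\in L^2(K)$, so it equals $\|f_t\|_{L^2(K)}^2\|g_h\|_{L^2(K)}^2$. The remaining scalar weight simplifies as $\delta(t)^2\cdot\delta(h)\delta(t)^{-1}=\delta(h)\delta(t)$, so the whole expression factors cleanly:
\begin{align*}
\|\mathcal{G}_gf\|_{L^2(G_\tau\times G_{\widehat{\tau}})}^2
&=\left(\int_H\delta(h)\|g_h\|_{L^2(K)}^2\,dh\right)\left(\int_H\delta(t)\|f_t\|_{L^2(K)}^2\,dt\right).
\end{align*}

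Finally, I would identify each factor: by Fubini applied to $\|\cdot\|_{L^2(G_\tau)}^2$ with $d\mu_{G_\tau}(h,k)=\delta(h)dhdk$, the first factor is $\|g\|_{L^2(G_\tau)}^2$ and the second is $\|f\|_{L^2(G_\tau)}^2$. This gives $\|\mathcal{G}_gf\|_{L^2(G_\tau\times G_{\widehat{\tau}})}=\|g\|_{L^2(G_\tau)}\|f\|_{L^2(G_\tau)}$, hence $\|g\|_{L^2(G_\tau)}^{-1}\mathcal{G}_g$ is a linear isometry from $L^2(G_\tau)$ into $L^2(G_\tau\times G_{\widehat{\tau}})$, whose image is automatically a closed subspace. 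I do not anticipate any real obstacle: the argument is essentially the same two-step computation as in Theorem \ref{P1}, and the only point requiring care is the cancellation of the $\delta$ factors coming from the two distinct Haar measures (one with $\delta$, one with $\delta^{-1}$) against the $\delta(t)^2$ built into the definition of $\mathcal{G}_g$.
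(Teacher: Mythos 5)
Your proof is correct and follows essentially the same route as the paper: expand the product Haar measure $\delta(h)dh\,dk\,\delta(t)^{-1}dt\,d\omega$, cancel against the $\delta(t)^2$ from the definition of $\mathcal{G}_g$, apply the LCA Plancherel identity (\ref{PL0}) for each fixed $(h,t)$, and reassemble the two $H$-integrals into $\|g\|_{L^2(G_\tau)}^2\|f\|_{L^2(G_\tau)}^2$. Your normalization $\|g\|_{L^2(G_\tau)}^{-1}\mathcal{G}_g$ is in fact the correct isometric multiple (the paper's concluding line writes $\|g\|_{L^2(G_\tau)}^{-2}$, a harmless slip since the theorem only asserts a multiple of an isometry).
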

\begin{proof}
Let $g\in L^2(G_\tau)$ be a window function and also let $f\in L^2(G_\tau)$.
\begin{align*}
\|\mathcal{G}_gf\|_{L^2(G_\tau\times G_{\widehat{\tau}})}^2
&=\int_{G_\tau\times G_{\widehat{\tau}}}|\mathcal{G}_gf(h,k,t,\omega)|^2d\mu_{G_\tau\times G_{\widehat{\tau}}}(h,k,t,\omega)
\\&=\int_{G_\tau}\int_{G_{\widehat{\tau}}}|\mathcal{G}_gf(h,k,t,\omega)|^2d\mu_{G_\tau}(h,k) d\mu_{G_{\widehat{\tau}}}(t,\omega)
\\&=\int_H\int_K\int_H\int_{\widehat{K}}|\mathcal{G}_gf(h,k,t,\omega)|^2\delta(h)dhdk\delta(t)^{-1}dtd\omega
\\&=\int_H\int_H\left(\int_{K\times\widehat{K}}|\langle f_t,\varrho(k,\omega)g_h\rangle_{L^2(K)}|^2dkd\omega\right) \delta(h)dh\delta(t)dt
\\&=\int_H\int_H\|f_t\|_{L^2(K)}^2\|g_h\|_{L^2(K)}^2\delta(h)dh\delta(t)dt=\|f\|_{L^2(G_\tau)}^2\|g\|_{L^2(G_\tau)}^2
\end{align*}
Thus, $\|g\|_{L^2(G_\tau)}^{-2}\mathcal{G}_g:L^2(G_\tau)\to L^2(G_\tau\times G_{\widehat{\tau}})$ is an isometric transform with a closed range in $L^2(G_{\tau}\times G_{\widehat{\tau}})$.
\end{proof}


\begin{corollary}
{\it The $\tau\times\widehat{\tau}$-continuous Gabor transform defined in (\ref{CGT2}), for all $f,f'\in L^2(G_\tau)$ and window functions $g,g'\in L^2(G_\tau)$ satisfies the following orthogonality relation;
\begin{equation}
\langle\mathcal{G}_gf,\mathcal{G}_{g'}f'\rangle_{L^2(G_\tau\times G_{\widehat{\tau}})}=\langle g',g\rangle_{L^2(G_\tau)}\langle f,f'\rangle_{L^2(G_\tau)}.
\end{equation}
}\end{corollary}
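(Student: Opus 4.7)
The plan is to polarize Theorem~\ref{P3}, or equivalently to run a direct computation mirroring its proof, reducing the matter to the standard (polarized) orthogonality relation for the continuous Gabor transform on the LCA group $K$.

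First I would expand the inner product on the left-hand side using the definition (\ref{CGT2}) and the explicit Haar measure on $G_\tau\times G_{\widehat{\tau}}$, namely $\delta(h)\delta(t)^{-1}dh\,dk\,dt\,d\omega$. The product $\mathcal{G}_g f\cdot\overline{\mathcal{G}_{g'}f'}$ contributes a factor $\delta(t)^2$, which combines with the $\delta(t)^{-1}$ from the measure to leave a single $\delta(t)$; together with the surviving $\delta(h)$ this exactly reconstructs the left Haar measures of $G_\tau$ in the $(h,k)$- and $(t,\omega)$-variables separately. This is the same bookkeeping step already carried out in the proof of Theorem~\ref{P3}, and it is the only delicate point in the argument.

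Next I would apply Fubini's theorem to isolate the $(k,\omega)$-integration. The inner integral
\[
\int_{K\times\widehat{K}}\langle f_t,\varrho(k,\omega)g_h\rangle_{L^2(K)}\,\overline{\langle f'_t,\varrho(k,\omega)g'_h\rangle_{L^2(K)}}\,dk\,d\omega
\]
is precisely $\langle V_{g_h}f_t,V_{g'_h}f'_t\rangle_{L^2(K\times\widehat{K})}$, which by the polarized form of the Plancherel formula (\ref{PL0}) for the standard continuous Gabor transform on $K$ equals $\langle g'_h,g_h\rangle_{L^2(K)}\langle f_t,f'_t\rangle_{L^2(K)}$.

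Finally, the remaining expression factors as
\[
\Big(\int_H\langle g'_h,g_h\rangle_{L^2(K)}\delta(h)\,dh\Big)\Big(\int_H\langle f_t,f'_t\rangle_{L^2(K)}\delta(t)\,dt\Big),
\]
and each factor, after unfolding the inner products on $L^2(K)$ and applying Fubini once more, equals the corresponding inner product on $L^2(G_\tau)$. This yields exactly $\langle g',g\rangle_{L^2(G_\tau)}\langle f,f'\rangle_{L^2(G_\tau)}$. Alternatively, one may bypass computation altogether by invoking polarization of the quadratic identity in Theorem~\ref{P3}: since $\mathcal{G}_g f$ is linear in $f$ and conjugate-linear in $g$, the standard polarization identity applied first in $(f,f')$ with $g=g'$ fixed, and then in $(g,g')$, produces the stated sesquilinear identity directly.
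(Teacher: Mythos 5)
Your proposal is correct and matches the paper's intent: the corollary is stated there without proof precisely because it follows from Theorem~\ref{P3} by the computation you describe (the $\delta(t)^2\cdot\delta(t)^{-1}$ bookkeeping plus the polarized orthogonality relation for $V_u$ on $K$), or equivalently by polarizing the quadratic identity in $f$ and then in $g$. Both of your routes are sound, and the sesquilinearity orientation you use gives exactly the stated factor $\langle g',g\rangle_{L^2(G_\tau)}\langle f,f'\rangle_{L^2(G_\tau)}$.
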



In the following proposition we also prove an inversion formula.


\begin{proposition}\label{I3}
{\it Let $H$ be a locally compact group and $K$ be an LCA group also let $\tau:H\to Aut(K)$ be a continuous homomorphism. Every $f,g\in L^2(G_\tau)$ with $\widehat{f_h},\widehat{g_h}\in L^1(\widehat{K})$ for a.e. $h\in H$, satisfy the following reconstruction formula;
\begin{equation}
f(t,k)=\langle g_h,g_h\rangle^{-1}_{L^2(K)}\delta(t)^{-1}\int_{K\times\widehat{K}}\mathcal{G}_gf(h,s,t,\omega)[\varrho(s,\omega)g_h](k)dsd\omega,
\end{equation}
for a.e. $h,t\in H$ and $k\in K$. In particular, for a.e. $h\in H$ we have
\begin{equation}\label{I30}
f(h,k)=\langle g_h,g_h\rangle^{-1}_{L^2(K)}\delta(h)^{-1}\int_{K\times\widehat{K}}\mathcal{G}_gf(h,s,h,\omega)[\varrho(s,\omega)g_h](k)dsd\omega.
\end{equation}
}\end{proposition}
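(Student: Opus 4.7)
The strategy is to reduce the statement to the classical STFT inversion formula (\ref{PI}) on the LCA group $K$, applied pointwise in the $H$-variables, and then translate back through the defining relation (\ref{CGT2}).

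First, I would fix a full-measure set $H_0 \subseteq H$ on which $\widehat{f_h}, \widehat{g_h} \in L^1(\widehat{K})$, and restrict attention to $h, t \in H_0$. For such $h$ the section $g_h$ belongs to $L^2(K)$ and has integrable Fourier transform, so $g_h$ qualifies as a window function in the sense required by (\ref{PI}); similarly, for $t \in H_0$ the signal $f_t$ is an element of $L^2(K)$ with $\widehat{f_t} \in L^1(\widehat{K})$.

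Next, I would invoke the classical inversion formula (\ref{PI}) with window $g_h$ and input $f_t$, obtaining, for almost every $k \in K$,
\begin{equation*}
f_t(k) \;=\; \|g_h\|_{L^2(K)}^{-2}\int_{K\times\widehat{K}}V_{g_h}f_t(s,\omega)\,[\varrho(s,\omega)g_h](k)\,ds\,d\omega.
\end{equation*}
Now I would substitute the defining identity (\ref{CGT2}), which reads $V_{g_h}f_t(s,\omega)=\delta(t)^{-1}\mathcal{G}_g f(h,s,t,\omega)$. Pulling the constant $\delta(t)^{-1}$ outside the integral and recalling that $f_t(k)=f(t,k)$ and $\|g_h\|_{L^2(K)}^{2}=\langle g_h,g_h\rangle_{L^2(K)}$ yields exactly the advertised reconstruction formula. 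The particular case (\ref{I30}) follows by setting $t=h\in H_0$.

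There is essentially no obstacle beyond bookkeeping: the entire content sits in (\ref{PI}) together with the factorization $\mathcal{G}_g f(h,s,t,\omega)=\delta(t)V_{g_h}f_t(s,\omega)$. The only subtle point worth verifying carefully is that the almost-everywhere statement in $h$ from the hypothesis applies equally to the variable $t$ (so that $\widehat{f_t}\in L^1(\widehat{K})$ and the inversion formula (\ref{PI}) is genuinely available at $f_t$), and that the convergence of the double integral on $K\times\widehat{K}$ is handled as in the classical LCA Gabor inversion, which we are quoting as a black box.
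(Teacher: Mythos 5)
Your argument is exactly the paper's own proof: apply the classical inversion formula (\ref{PI}) with window $g_h$ and signal $f_t$, then substitute $V_{g_h}f_t(s,\omega)=\delta(t)^{-1}\mathcal{G}_g f(h,s,t,\omega)$ from the definition (\ref{CGT2}) and set $t=h$ for the particular case. Your additional remarks on the a.e.\ sets in $h$ and $t$ are correct bookkeeping and do not change the route.
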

\begin{proof}
Using (\ref{PI}) for a.e. $h,t\in H$ we have
\begin{align*}
f_t(k)&=\langle g_h,g_h\rangle_{L^2(K)}^{-1}\int_{K\times\widehat{K}}V_{g_h}f_t(s,\omega)[\varrho(s,\omega)g_h](k)dsd\omega
\\&=\langle g_h,g_h\rangle^{-1}_{L^2(K)}\delta(t)^{-1}\int_{K\times\widehat{K}}\mathcal{G}_gf(h,s,t,\omega)[\varrho(s,\omega)g_h](k)dsd\omega.
\end{align*}
\end{proof}


Let $g\in L^2(G_\tau)$ be a window function and $f\in L^2(G_\tau)$.
The {\it generalized $\tau\otimes\widehat{\tau}$-continuous Gabor transform} of $f$ with respect to the window function $g$ is defined by
\begin{equation}\label{GCGT2}
\mathcal{G}_g^\dagger f(h,k,t,\omega):=\delta(h)^{-1/2}\delta(t)^{3/2}V_{g_h}f_t(k^h,\omega_t)
=\delta(h)^{-1/2}\delta(t)^{3/2}\langle f_t,\varrho(k^h,\omega_t)g_h\rangle_{L^2(K)}.
\end{equation}

In the next theorem, a Plancherel formula for the generalized
$\tau\otimes\widehat{\tau}$-continuous Gabor transform defined in (\ref{GCGT2}) proved.


\begin{theorem}\label{P4}
Let $H$ be a locally compact group, $K$ be an LCA group and $\tau:H\to Aut(K)$ be a continuous homomorphism also $G_\tau=H\ltimes_\tau K$ and also let $g\in L^2(G_\tau)$ be a window function. The generalized continuous Gabor transform $\mathcal{G}_g^\dagger:L^2(G_\tau)\to L^2(G_\tau\times G_{\widehat{\tau}})$ is a multiple of an isometric transform which maps $L^2(G_\tau)$ onto a closed subspace of $L^2(G_{\tau}\times G_{\widehat{\tau}})$.
\end{theorem}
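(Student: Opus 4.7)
The plan is to mimic the proof of Theorem \ref{P3} but carry through the extra change of variables that the twist $(k,\omega)\mapsto(k^h,\omega_t)$ forces. First, using the explicit left Haar measure $d\mu_{G_\tau\times G_{\widehat{\tau}}}(h,k,t,\omega)=\delta(h)\delta(t)^{-1}dh\,dk\,dt\,d\omega$ on $G_\tau\times G_{\widehat{\tau}}$ and the definition \eqref{GCGT2}, I would expand
\begin{align*}
\|\mathcal{G}_g^\dagger f\|_{L^2(G_\tau\times G_{\widehat{\tau}})}^2
&=\int_H\!\int_H\!\int_K\!\int_{\widehat{K}}\delta(h)^{-1}\delta(t)^{3}\bigl|V_{g_h}f_t(k^h,\omega_t)\bigr|^2\,\delta(h)\delta(t)^{-1}\,dh\,dk\,dt\,d\omega\\
&=\int_H\!\int_H\delta(t)^2\left(\int_{K\times\widehat{K}}\bigl|V_{g_h}f_t(k^h,\omega_t)\bigr|^2dk\,d\omega\right)dh\,dt,
\end{align*}
after an application of Fubini's theorem. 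Only the innermost $K\times\widehat{K}$ integral depends on $h$ and $t$ through the twisted arguments, so that is where the work happens.

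Next I would perform the substitutions $k\mapsto k^{h^{-1}}$ and $\omega\mapsto\omega_{t^{-1}}$ in that inner integral. By \eqref{t} we have $dk=\delta(h)\,dk^h$ (equivalently $dk^h=\delta(h)^{-1}dk$), and by \eqref{AA2} we have $d\omega_t=\delta(t)\,d\omega$. Reading these as change-of-variables rules, the inner integral becomes
\begin{equation*}
\int_{K\times\widehat{K}}\bigl|V_{g_h}f_t(k^h,\omega_t)\bigr|^2dk\,d\omega
=\delta(h)\delta(t)^{-1}\int_{K\times\widehat{K}}\bigl|V_{g_h}f_t(k,\omega)\bigr|^2dk\,d\omega.
\end{equation*}
I can now appeal to the classical Plancherel formula \eqref{PL0} for the continuous Gabor transform on the LCA group $K$ to rewrite the right-hand side as $\delta(h)\delta(t)^{-1}\|g_h\|_{L^2(K)}^2\|f_t\|_{L^2(K)}^2$.

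Putting the pieces together, the $\delta(t)^2$ prefactor combines with $\delta(h)\delta(t)^{-1}$ to give $\delta(h)\delta(t)$, and the double integral in $(h,t)$ separates:
\begin{equation*}
\|\mathcal{G}_g^\dagger f\|_{L^2(G_\tau\times G_{\widehat{\tau}})}^2
=\left(\int_H\|g_h\|_{L^2(K)}^2\delta(h)\,dh\right)\left(\int_H\|f_t\|_{L^2(K)}^2\delta(t)\,dt\right)
=\|g\|_{L^2(G_\tau)}^2\|f\|_{L^2(G_\tau)}^2,
\end{equation*}
since $d\mu_{G_\tau}(h,k)=\delta(h)\,dh\,dk$. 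Hence $\|g\|_{L^2(G_\tau)}^{-1}\mathcal{G}_g^\dagger$ is an isometry into $L^2(G_\tau\times G_{\widehat{\tau}})$, so its range is automatically closed, which is the required conclusion.

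The only genuinely delicate step is the bookkeeping of the $\delta$-factors: one must track the $\delta(h)^{-1/2}\delta(t)^{3/2}$ coming from the definition, the Haar factor $\delta(h)\delta(t)^{-1}$, and the Jacobians $\delta(h)^{-1}$ and $\delta(t)$ produced by the twisted changes of variables, and verify that they balance to $\delta(h)\delta(t)$ so that the outer integrals reassemble into the two $L^2(G_\tau)$-norms. Once this accounting is carried out, the result reduces to an invocation of \eqref{PL0} exactly as in the proof of Theorem \ref{P2}, but with the extra variable $t\in H$ now playing a role symmetric to $h$.
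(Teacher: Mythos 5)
Your proposal is correct and follows essentially the same route as the paper's proof: expand the norm against the product Haar measure $\delta(h)\delta(t)^{-1}dh\,dk\,dt\,d\omega$, apply Fubini, undo the twist $(k,\omega)\mapsto(k^h,\omega_t)$ via the change-of-variables relations \eqref{t} and \eqref{AA2} (producing the Jacobian $\delta(h)\delta(t)^{-1}$), invoke the LCA Plancherel formula \eqref{PL0}, and reassemble the two $H$-integrals into $\|g\|_{L^2(G_\tau)}^2\|f\|_{L^2(G_\tau)}^2$. Your bookkeeping of the $\delta$-factors matches the paper's, and your normalization $\|g\|_{L^2(G_\tau)}^{-1}\mathcal{G}_g^\dagger$ is in fact the correct isometry constant (the paper writes $\|g\|_{L^2(G_\tau)}^{-2}$, a harmless slip).
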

\begin{proof}
Let $g\in L^2(G_\tau)$ be a window function and also let $f\in L^2(G_\tau)$. Using Fubini's theorem and also Theorem we achieve
\begin{align*}
\|\mathcal{G}_gf\|_{L^2(G_\tau\times G_{\widehat{\tau}})}^2
&=\int_{G_\tau\times G_{\widehat{\tau}}}|\mathcal{G}_g^\dagger f(h,k,t,\omega)|^2d\mu_{G_\tau\times G_{\widehat{\tau}}}(h,k,t,\omega)
\\&=\int_{G_\tau}\int_{G_{\widehat{\tau}}}|\mathcal{G}_g^\dagger f(h,k,t,\omega)|^2d\mu_{G_\tau}(h,k) d\mu_{G_{\widehat{\tau}}}(t,\omega)
\\&=\int_H\int_K\int_H\int_{\widehat{K}}|\mathcal{G}_g^\dagger f(h,k,t,\omega)|^2\delta(h)dhdk\delta(t)^{-1}dtd\omega
\\&=\int_H\int_H\left(\int_{K\times\widehat{K}}|\langle f_t,\varrho(k^h,\omega_t)g_h\rangle_{L^2(K)}|^2dkd\omega\right) dh\delta(t)^{2}dt
\\&=\int_H\int_H\left(\int_{K\times\widehat{K}}|\langle f_t,\varrho(k,\omega)g_h\rangle_{L^2(K)}|^2dk^{h^{-1}}d\omega_{t^{-1}}\right) dh\delta(t)^{2}dt
\\&=\int_H\int_H\left(\int_{K\times\widehat{K}}|\langle f_t,\varrho(k,\omega)g_h\rangle_{L^2(K)}|^2dkd\omega\right) \delta(h)dh\delta(t)dt
\\&=\int_H\int_H\|f_t\|_{L^2(K)}^2\|g_h\|_{L^2(K)}^2\delta(h)dh\delta(t)dt=\|f\|_{L^2(G_\tau)}^2\|g\|_{L^2(G_\tau)}^2
\end{align*}
Thus, $\|g\|_{L^2(G_\tau)}^{-2}\mathcal{G}_g^\dagger:L^2(G_\tau)\to L^2(G_\tau\times G_{\widehat{\tau}})$ is an isometric transform with a closed range in $L^2(G_{\tau}\times G_{\widehat{\tau}})$.
\end{proof}


\begin{corollary}
{\it The $\tau\times\widehat{\tau}$-continuous Gabor transform defined in (\ref{GCGT2}), for all $f,f'\in L^2(G_\tau)$ and window functions $g,g'\in L^2(G_\tau)$ satisfies the following orthogonality relation;
\begin{equation}
\langle\mathcal{G}_g^\dagger f,\mathcal{G}_{g'}^\dagger f'\rangle_{L^2(G_\tau\times G_{\widehat{\tau}})}=\langle g',g\rangle_{L^2(G_\tau)}\langle f,f'\rangle_{L^2(G_\tau)}.
\end{equation}
}\end{corollary}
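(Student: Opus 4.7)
The plan is to obtain the corollary by polarization from the Plancherel identity of Theorem \ref{P4}. Observe that $f\mapsto\mathcal{G}_g^\dagger f$ is linear in $f$ and, because the window $g$ enters through the inner product $\langle f_t,\varrho(k^h,\omega_t)g_h\rangle_{L^2(K)}$, is conjugate-linear in $g$. Consequently both sides of the claimed identity are sesquilinear forms in $(f,f',g,g')$, linear in $f$ and $g'$ and conjugate-linear in $f'$ and $g$. Specializing $f=f'$ and $g=g'$ gives exactly $\|\mathcal{G}_g^\dagger f\|_{L^2(G_\tau\times G_{\widehat{\tau}})}^2 = \|g\|_{L^2(G_\tau)}^2\|f\|_{L^2(G_\tau)}^2$, which is the content of Theorem \ref{P4}. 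A standard four-term polarization identity then upgrades this diagonal identity to the full orthogonality relation.

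As an alternative, one can redo the calculation in the proof of Theorem \ref{P4} verbatim, replacing $|\mathcal{G}_g^\dagger f|^2$ by $\mathcal{G}_g^\dagger f\,\overline{\mathcal{G}_{g'}^\dagger f'}$ and the squared norms $\|f_t\|_{L^2(K)}^2$, $\|g_h\|_{L^2(K)}^2$ by the corresponding inner products. After Fubini and the change of variables $k\mapsto k^{h^{-1}}$, $\omega\mapsto\omega_{t^{-1}}$ (whose Jacobian is $\delta(h)^{-1}\delta(t)$ by (\ref{t}) and (\ref{AA2})), the inner Gabor integral reduces to
$$\int_{K\times\widehat{K}}V_{g_h}f_t(k,\omega)\overline{V_{g'_h}f'_t(k,\omega)}\,dk\,d\omega = \langle f_t,f'_t\rangle_{L^2(K)}\overline{\langle g_h,g'_h\rangle_{L^2(K)}},$$
which is the polarized form of the standard Gabor Plancherel identity (\ref{PL0}) on the LCA group $K$. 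Integrating the resulting expression against $\delta(h)\,dh\,\delta(t)\,dt$ reassembles the right-hand side as $\langle g',g\rangle_{L^2(G_\tau)}\langle f,f'\rangle_{L^2(G_\tau)}$.

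The only bookkeeping concern is the powers of $\delta$, but these balance exactly as in the proof of Theorem \ref{P4}, so no new estimate is required. I do not anticipate a genuine obstacle, since polarization is automatic once Plancherel has been established for a sesquilinear map of this form; either route produces the stated identity with essentially no additional work.
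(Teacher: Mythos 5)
Your proposal is correct and coincides with the paper's (implicit) derivation: the corollary is exactly the polarized form of the Plancherel identity of Theorem \ref{P4}, obtainable either by polarization or by rerunning that proof with the integrand $\mathcal{G}_g^\dagger f\,\overline{\mathcal{G}_{g'}^\dagger f'}$ and the polarized version of (\ref{PL0}). The only small point to make explicit is that the polarization must be applied twice — first in the pair $(f,f')$ with $g=g'$ fixed, then in $(g,g')$ with $f,f'$ fixed, using the sesquilinearity in each pair separately — since the map $(f,g)\mapsto\mathcal{G}_g^\dagger f$ is not linear in the joint variable; with that noted, both of your routes go through without difficulty.
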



Also, the generalized $\tau\otimes\widehat{\tau}$-continuous Gabor transform satisfies the following inversion formula.


\begin{proposition}\label{I40}
{\it Let $H$ be a locally compact group and $K$ be an LCA group also let $\tau:H\to Aut(K)$ be a continuous homomorphism. Every $f,g\in L^2(G_\tau)$ with $\widehat{f_h},\widehat{g_h}\in L^1(\widehat{K})$ for a.e. $h,t\in H$, satisfy the following reconstruction formula;
\begin{equation}
f(t,k)=\langle g_h,g_h\rangle^{-1}_{L^2(K)}\delta(h)^{-1/2}\delta(t)^{-1/2}\int_{K\times\widehat{K}}\mathcal{G}_g^\dagger f(h,s,t,\omega)[\varrho(k^h,\omega_t)g_h](k)dsd\omega,
\end{equation}
for a.e. $h,t\in H$ and $k\in K$. In particular for a.e. $h\in H$ we have
\begin{equation}
f(h,k)=\langle g_h,g_h\rangle^{-1}_{L^2(K)}\delta(h)^{-1}\int_{K\times\widehat{K}}\mathcal{G}_g^\dagger f(h,s,h,\omega)[\varrho(s^h,\omega_h)g_h](k)dsd\omega.
\end{equation}
}\end{proposition}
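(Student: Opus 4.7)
The plan is to start from the standard Gabor inversion formula \eqref{PI} applied on the LCA group $K$ to the slice $f_t \in L^2(K)$ with window $g_h \in L^2(K)$, and then convert the resulting integral into one that involves $\mathcal{G}_g^\dagger f$ via a change of variables on $K\times\widehat{K}$ governed by the automorphisms $\tau_h$ and $\widehat{\tau}_t$. The hypotheses $\widehat{f_h},\widehat{g_h}\in L^1(\widehat{K})$ for a.e.\ $h\in H$ are exactly what is needed to invoke \eqref{PI}.

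First, for a.e.\ $h,t\in H$, \eqref{PI} yields
\begin{equation*}
f_t(k)=\|g_h\|_{L^2(K)}^{-2}\int_{K\times\widehat{K}}V_{g_h}f_t(s,\omega)[\varrho(s,\omega)g_h](k)\,ds\,d\omega.
\end{equation*}
Next I would substitute $s\mapsto s^h$ and $\omega\mapsto\omega_t$ in the integral. By the transformation rules \eqref{t} and \eqref{AA2} (used in the same way as in the proof of Theorem \ref{P2}), the product measure transforms as $ds\,d\omega=\delta(h)^{-1}\delta(t)\,d(s^h)^{-1}\,d(\omega_t)^{-1}$; concretely, pulling $s^h$ back to $s$ contributes a factor $\delta(h)^{-1}$ and pulling $\omega_t$ back to $\omega$ contributes $\delta(t)$. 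This gives
\begin{equation*}
f_t(k)=\|g_h\|_{L^2(K)}^{-2}\,\delta(h)^{-1}\delta(t)\int_{K\times\widehat{K}}V_{g_h}f_t(s^h,\omega_t)[\varrho(s^h,\omega_t)g_h](k)\,ds\,d\omega.
\end{equation*}

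Then I would insert the definition \eqref{GCGT2}, namely
\begin{equation*}
V_{g_h}f_t(s^h,\omega_t)=\delta(h)^{1/2}\delta(t)^{-3/2}\,\mathcal{G}_g^\dagger f(h,s,t,\omega),
\end{equation*}
and collect the scaling factors $\delta(h)^{-1}\delta(t)\cdot\delta(h)^{1/2}\delta(t)^{-3/2}=\delta(h)^{-1/2}\delta(t)^{-1/2}$, producing exactly the claimed identity with $\varrho(s^h,\omega_t)g_h$ inside the integral. The special case $t=h$ then collapses the prefactor $\delta(h)^{-1/2}\delta(t)^{-1/2}$ to $\delta(h)^{-1}$ and replaces $\omega_t$ by $\omega_h$, giving the second displayed formula.

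The only real difficulty is bookkeeping: one must apply the Jacobians from \eqref{t} and \eqref{AA2} in the correct direction (the $K$-side and the $\widehat{K}$-side scale in opposite ways under $\tau_h$ and $\widehat{\tau}_t$), and must track the asymmetric powers $\delta(h)^{-1/2}$ and $\delta(t)^{3/2}$ built into the definition of $\mathcal{G}_g^\dagger$. Once the substitution is laid out in the order above, everything is a routine cancellation entirely analogous to the derivation of \eqref{I20} from Proposition \ref{I1} (replacing the window $u$ by $g_h$ and the fixed slice $f_h$ by $f_t$), so no further machinery beyond the Plancherel measure identity \eqref{AA2} and the LCA Gabor inversion \eqref{PI} is required.
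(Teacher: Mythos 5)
Your proposal is correct and follows essentially the same route as the paper: apply the LCA inversion formula \eqref{PI} to the slice $f_t$ with window $g_h$, perform the change of variables $(s,\omega)\mapsto(s^h,\omega_t)$ using \eqref{t} and \eqref{AA2} to pick up the factor $\delta(h)^{-1}\delta(t)$, and then insert the definition \eqref{GCGT2} to collect $\delta(h)^{-1/2}\delta(t)^{-1/2}$, with $t=h$ giving the particular case. The only blemish is the garbled intermediate display ``$ds\,d\omega=\delta(h)^{-1}\delta(t)\,d(s^h)^{-1}\,d(\omega_t)^{-1}$'', but your concrete description of the Jacobian factors (and hence the computation) is exactly the paper's.
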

\begin{proof}
Using (\ref{PI}) for a.e. $h,t\in H$ we have
\begin{align*}
f_t(k)&=\langle g_h,g_h\rangle^{-1}_{L^2(K)}\int_{K\times\widehat{K}}V_{g_h}f_t(s,\omega)[\varrho(s,\omega)g_h](k)dsd\omega
\\&=\langle g_h,g_h\rangle^{-1}_{L^2(K)}\int_K\int_{\widehat{K}}V_{g_h}f_t(s,\omega)[\varrho(s,\omega)g_h](k)d\omega ds
\\&=\langle g_h,g_h\rangle^{-1}_{L^2(K)}\int_K\left(\int_{\widehat{K}}V_{g_h}f_t(s,\omega_t)[\varrho(s,\omega_t)g_h](k)d\omega_t\right) ds
\\&=\langle g_h,g_h\rangle^{-1}_{L^2(K)}\delta(t)\int_{\widehat{K}}\left(\int_KV_{g_h}f_t(s^h,\omega_t)[\varrho(s^h,\omega_t)g_h](k)ds^h\right) d\omega
\\&=\langle g_h,g_h\rangle^{-1}_{L^2(K)}\delta(h)^{-1}\delta(t)\int_{\widehat{K}}\left(\int_KV_{g_h}f_t(s^h,\omega_t)[\varrho(s^h,\omega_t)g_h](k)ds\right) d\omega
\\&=\langle g_h,g_h\rangle^{-1}_{L^2(K)}\delta(h)^{-1/2}\delta(t)^{-1/2}\int_{K\times\widehat{K}}\mathcal{G}_gf(h,s,t,\omega)[\varrho(s^h,\omega_t)g_h](k)dsd\omega.
\end{align*}
\end{proof}

\section{\bf{Examples and applications}}

\subsection{The Affine group $\mathbf{a}\mathbf{x}+\mathbf{b}$}
Let $H=\mathbb{R}^*_+=(0,+\infty)$ and $K=\mathbb{R}$. The affine group $a{\bf x}+b$ is the semi direct product $H\ltimes_\tau K$ with respect to the homomorphism $\tau:H\to Aut(K)$ given by $a\mapsto \tau_a$,
where $\tau_a(x)=ax$ for all $x\in\mathbb{R}$. Hence, the underlying manifold of the affine group is $(0,\infty)\times\mathbb{R}$ and also the group law is
\begin{equation}
(a,x)\ltimes_\tau(a',x')=(aa',x+ax').
\end{equation}
The continuous homomorphism $\delta:H\to(0,\infty)$ is given by $\delta(a)=a^{-1}$ and so that the left Haar measure is in fact $d\mu_{G_\tau}(a,x)=a^{-2}dadx$. Due to Theorem 4.5 of \cite{FollH} we can identify $\widehat{\mathbb{R}}$ with $\mathbb{R}$ via $\omega(x)=\langle x,\omega\rangle=e^{2\pi i\omega x}$ for each $\omega\in\widehat{\mathbb{R}}$ and so we can consider the continuous homomorphism $\widehat{\tau}:H\to Aut(\widehat{K})$ given by $a\mapsto\widehat{\tau}_a$ via
\begin{align*}\label{500}
\langle x,\omega_a\rangle&=\langle x,\widehat{\tau}_a(\omega)\rangle
\\&=\langle\tau_{a^{-1}}(x),\omega\rangle
=\langle a^{-1}x,\omega\rangle=e^{2\pi i\omega a^{-1}x}.
\end{align*}
Thus, $G_{\widehat{\tau}}$ has the underlying manifold $(0,\infty)\times \mathbb{R}$, with the group law given by
\begin{equation}
(a,\omega)\ltimes_{\widehat{\tau}}(a',\omega')=(aa',\omega\omega'_a),
\end{equation}
Due to Theorem \ref{T} the left Haar measure $d\mu_{G_{\widehat{\tau}}}(a,\omega)$ is precisely $dad\omega$.
The $\tau\times\widehat{\tau}$-time frequency group $G_{\tau\times\widehat{\tau}}$ has the underlying manifold $(0,\infty)\times \mathbb{R}\times\widehat{\mathbb{R}}$ and the group law is
\begin{equation}
(a,x,\omega)\ltimes_{\tau\times\widehat{\tau}}(a',x',\omega')=(aa',x+ax',\omega\omega'_a),
\end{equation}
with the left Haar measure $d\mu_{G_{\tau\times\widehat{\tau}}}(a,x,\omega)=a^{-1}dadxd\omega$.
If $u\in L^2(\mathbb{R})$ is a window function and also $f\in L^2(G_\tau)$. According to (\ref{CGT1}) we have
\begin{align*}
\mathcal{V}_uf(a,x,\omega)&=\delta(a)^{1/2}V_uf_a(x,\omega)
\\&=a^{-1/2}\langle f_a,\varrho(x,\omega)u\rangle_{L^2(\mathbb{R})}
\\&=a^{-1/2}\int_{-\infty}^{\infty}f(a,y)\overline{[\varrho(x,\omega)u](y)}dy
\\&=a^{-1/2}\int_{-\infty}^{\infty}f(a,y)\overline{u}(y-x)\overline{\omega(y)}dy
=a^{-1/2}\int_{-\infty}^{\infty}f(a,y)\overline{u}(y-x)e^{-2\pi i\omega y}dy.
\end{align*}
Using Theorem \ref{P1}, if $\|u\|_{L^2(\mathbb{R})}=1$ we get
\begin{equation}
\int_{0}^\infty\int_{-\infty}^{\infty}\int_{-\infty}^{\infty}\frac{|\mathcal{V}_uf(a,x,\omega)|^2}{a}dadxd\omega
=\int_0^\infty\int_{-\infty}^{\infty}\frac{|f(a,x)|^2}{a^2}dadx.
\end{equation}
Due to the reconstruction formula (\ref{I10}) if for a.e. $a\in (0,\infty)$ we have $\widehat{f_a}\in L^1(\mathbb{R})$,
then for a.e. $x\in \mathbb{R}$ we get
\begin{align*}
f(a,x)&=\delta(a)^{-1/2}\|u\|_{L^2(K)}^{-2}\int_{-\infty}^{\infty}\int_{-\infty}^{\infty}\mathcal{V}_uf(a,y,\omega)[\varrho(y,\omega)u](x)dyd\omega
\\&=a^{1/2}\|u\|_{L^2(K)}^{-2}\int_{-\infty}^{\infty}\int_{-\infty}^{\infty}\mathcal{V}_uf(a,y,\omega)u(x-y)e^{2\pi i\omega x}dyd\omega.
\end{align*}
As well as according to (\ref{GCGT1})  we have
\begin{align*}
\mathcal{V}_u^\dagger f(a,x,\omega)&=\delta(a)^{1/2}V_uf_a(x^a,\omega_a)
\\&=a^{-1/2}\langle f_a,\varrho(x^a,\omega_a)u\rangle_{L^2(\mathbb{R})}
\\&=a^{-1/2}\int_{-\infty}^{\infty}f(a,y)\overline{[\varrho(x^a,\omega_a)u](y)}dy
\\&=a^{-1/2}\int_{-\infty}^{\infty}f(a,y)\overline{u}(y-ax)\overline{\omega_a(y)}dy
=a^{-1/2}\int_{-\infty}^{\infty}f(a,y)\overline{u}(y-ax)e^{-2\pi i\omega a^{-1}y}dy.
\end{align*}
Using Theorem \ref{P2}, if $\|u\|_{L^2(\mathbb{R})}=1$ we get
\begin{equation}
\int_{0}^\infty\int_{-\infty}^{\infty}\int_{-\infty}^{\infty}\frac{|\mathcal{V}_u^\dagger f(a,x,\omega)|^2}{a}dadxd\omega
=\int_0^\infty\int_{-\infty}^{\infty}\frac{|f(a,x)|^2}{a^2}dadx.
\end{equation}
Due to the reconstruction formula (\ref{I20}) if for a.e. $a\in (0,\infty)$ we have $\widehat{f_a}\in L^1(\mathbb{R})$,
then for $x\in \mathbb{R}$ we achieve
\begin{align*}
f(a,x)&=\int_{-\infty}^{\infty}\int_{-\infty}^{\infty}\mathcal{V}_u^\dagger f(a,y,\omega)[\varrho(y^a,\omega_a)u](x)dyd\omega
\\&=\int_{-\infty}^{\infty}\int_{-\infty}^{\infty}\mathcal{V}_u^\dagger f(a,y,\omega)u(x-ay)e^{2\pi i\omega a^{-1}x}dyd\omega.
\end{align*}
\begin{example}
Let $N>0$ and also $u_N=\chi_{[-N,N]}$ be a window function with compact support and $\|u_N\|_{L^2(\mathbb{R})}=2N$. Then, for all $f\in L^2(G_\tau)$ and $(a,x,\omega)\in G_{\tau\times\widehat{\tau}}$ we have
\begin{align*}
\mathcal{V}_{u_N}f(a,x,\omega)&=a^{-1/2}\int_{-\infty}^{\infty}f(a,y)u_N(y-x)\overline{\omega(y)}dy
\\&=a^{-1/2}\overline{\omega(x)}\int_{-\infty}^{\infty}f(a,y+x)u_N(y)\overline{\omega(y)}dy
\\&=a^{-1/2}\overline{\omega(x)}\int_{-N}^{N}f(a,y+x)\overline{\omega(y)}dy
=a^{-1/2}e^{-2\pi i\omega x}\int_{-N}^{N}f(a,y+x)e^{-2\pi i\omega y}dy.
\end{align*}
If we set $x=0$, then we get
$$\mathcal{V}_{u_N}f(a,0,\omega)=a^{-1/2}\int_{-N}^Nf(a,y)e^{-2\pi i\omega y}dy.$$
Similarly, for the generalized $\tau\times\widehat{\tau}$-continuous Gabor transform we have
\begin{align*}
\mathcal{V}_{u_N}^\dagger f(a,x,\omega)&=a^{-1/2}\int_{-\infty}^{\infty}f(a,y)u_N(y-ax)e^{-2\pi i\omega a^{-1}y}dy
\\&=a^{-1/2}e^{-2\pi i\omega x}\int_{-\infty}^{\infty}f(a,y+ax)u_N(y)e^{-2\pi i\omega a^{-1}y}dy
=a^{-1/2}e^{-2\pi i\omega x}\int_{-N}^{N}f(a,y+ax)e^{-2\pi i\omega a^{-1}y}dy.
\end{align*}
and also if we set $x=0$, then we get
$$\mathcal{V}_{u_N}^\dagger f(a,0,\omega)=a^{-1/2}\int_{-N}^Nf(a,y)e^{-2\pi i\omega a^{-1}y}dy.$$
\end{example}
\begin{example}
Let $u(x)=e^{-\pi x^2}$ be the one-dimensional Gaussian window function with $\widehat{u}=u$ and $\|u\|_{L^2(\mathbb{R})}=2^{-1/4}$. Then, for all $f\in L^2(G_\tau)$ and $(a,x,\omega)\in G_{\tau\times\widehat{\tau}}$ we have
\begin{align*}
\mathcal{V}_uf(x,a,\omega)&=a^{-1/2}\int_{-\infty}^{\infty}f(a,y)u(y-x)e^{-2\pi i\omega y}dy
\\&=a^{-1/2}\int_{-\infty}^{\infty}f(a,y)e^{-\pi(y-x)^2}e^{-2\pi i\omega y}dy.
\end{align*}
If $f$ for a.e. $a\in(0,\infty)$ satisfies $\widehat{f_a}\in L^1(\mathbb{R})$, then we can reconstruct $f$ via
\begin{align*}
f(a,x)&=\int_{-\infty}^{\infty}\int_{-\infty}^{\infty}\mathcal{V}_{u}f(a,y,\omega)u(x-y)e^{2\pi i\omega y}dyd\omega
\\&=\int_{-\infty}^{\infty}\int_{-\infty}^{\infty}\mathcal{V}_{u}f(a,y,\omega)e^{-\pi(x-y)^2}e^{2\pi i\omega x}dyd\omega.
\end{align*}
As well as, we can compute the generalized $\tau\times\widehat{\tau}$-continuous Gabor transform by
\begin{align*}
\mathcal{V}_u^\dagger f(x,a,\omega)
&=a^{-1/2}\int_{-\infty}^{\infty}f(a,y)\overline{u}(y-ax)e^{-2\pi i\omega a^{-1}y}dy
\\&=a^{-1/2}\int_{-\infty}^{\infty}f(a,y)e^{-\pi(y-ax)^2}e^{-2\pi i\omega a^{-1}y}dy.
\end{align*}
If $f$ for a.e. $a\in(0,\infty)$ satisfies $\widehat{f_a}\in L^1(\mathbb{R})$, then we can reconstruct $f$ via
\begin{align*}
f(a,x)&=\int_{-\infty}^{\infty}\int_{-\infty}^{\infty}\mathcal{V}_u^\dagger f(a,y,\omega)u(x-ay)e^{2\pi i\omega a^{-1}x}dyd\omega
\\&=\int_{-\infty}^{\infty}\int_{-\infty}^{\infty}\mathcal{V}_{u}^\dagger f(a,y,\omega)e^{-\pi(x-ay)^2}e^{2\pi i\omega a^{-1}x}dyd\omega.
\end{align*}
\end{example}
The $\tau\otimes\widehat{\tau}$-time frequency group $G_{\tau\otimes\widehat{\tau}}$ has the underlying manifold $(0,\infty)\times(0,\infty)\times \mathbb{R}\times\widehat{\mathbb{R}}$ and the group law is
\begin{equation}
(a,b,x,\omega)\ltimes_{\tau\otimes\widehat{\tau}}(a',b',x',\omega')=(aa',bb',x+ax',\omega\omega'_b),
\end{equation}
with the left Haar measure $d\mu_{G_{\tau\otimes\widehat{\tau}}}(a,b,x,\omega)=a^{-2}dadbdxd\omega$. If $g\in L^2(G_\tau)$ is a window function and also $f\in L^2(G_\tau)$. According to (\ref{CGT2}) we have
\begin{align*}
\mathcal{G}_gf(a,x,b,\omega)&=\delta(b)V_{g_a}f_b(x,\omega)
\\&=b^{-1}\langle f_b,\varrho(x,\omega)g_a\rangle_{L^2(\mathbb{R})}
\\&=b^{-1}\int_{-\infty}^{\infty}f(b,y)\overline{[\varrho(x,\omega)g_a](y)}dy
=b^{-1}\int_{-\infty}^{\infty}f(b,y)\overline{g}(a,y-x)e^{-2\pi i\omega y}dy.
\end{align*}
Using Theorem \ref{P3}, if $\|g\|_{L^2(G_\tau)}=1$ we get
\begin{equation}
\int_0^{\infty}\int_{0}^\infty\int_{-\infty}^{\infty}\int_{-\infty}^{\infty}\frac{|\mathcal{G}_gf(a,x,b,\omega)|^2}{a^2}dadbdxd\omega
=\int_0^\infty\int_{-\infty}^{\infty}\frac{|f(a,x)|^2}{a^2}dadx.
\end{equation}
Using the reconstruction formula (\ref{I30}), if for a.e. $a\in (0,\infty)$ we have $\widehat{f_a},\widehat{g_a}\in L^1(\mathbb{R})$,
then for $x\in \mathbb{R}$ we can write
\begin{align*}
f(b,x)&=b\|g_a\|_{L^2(\mathbb{R})}^{-2}\int_{-\infty}^{\infty}\int_{-\infty}^{\infty}\mathcal{G}_gf(a,y,b,\omega)[\varrho(y,\omega)g_a](x)dyd\omega
\\&=b\|g_a\|_{L^2(\mathbb{R})}^{-2}\int_{-\infty}^{\infty}\int_{-\infty}^{\infty}\mathcal{G}_gf(a,y,b,\omega)g(a,x-y)e^{-2\pi i \omega x}dyd\omega,
\end{align*}
and also in particular we get
\begin{align*}
f(a,x)&=a\|g_a\|_{L^2(\mathbb{R})}^{-2}\int_{-\infty}^{\infty}\int_{-\infty}^{\infty}\mathcal{G}_gf(a,y,a,\omega)g(a,x-y)e^{-2\pi i \omega x}dyd\omega.
\end{align*}
As well as according to (\ref{GCGT2}) we have
\begin{align*}
\mathcal{G}_g^\dagger f(a,x,b,\omega)&=\delta(a)^{-1/2}\delta(b)^{3/2}V_{g_a}f_b(x^a,\omega_b)
\\&=a^{1/2}b^{-3/2}\langle f_b,\varrho(x^a,\omega_b)g_a\rangle_{L^2(\mathbb{R})}
\\&=a^{1/2}b^{-3/2}\int_{-\infty}^{\infty}f(b,y)\overline{[\varrho(x^a,\omega_b)g_a](y)}dy
\\&=a^{1/2}b^{-3/2}\int_{-\infty}^{\infty}f(b,y)\overline{g}(a,y-ax)\overline{\omega_b(y)}dy
=a^{1/2}b^{-3/2}\int_{-\infty}^{\infty}f(b,y)\overline{g}(a,y-ax)e^{-2\pi i\omega b^{-1}y}dy.
\end{align*}
Using Theorem \ref{P4}, if $\|g\|_{L^2(G_\tau)}=1$ we get
\begin{equation}
\int_0^{\infty}\int_{0}^\infty\int_{-\infty}^{\infty}\int_{-\infty}^{\infty}\frac{|\mathcal{G}_g^\dagger f(a,x,b,\omega)|^2}{a^2}dadbdxd\omega
=\int_0^\infty\int_{-\infty}^{\infty}\frac{|f(a,x)|^2}{a^2}dadx.
\end{equation}
Due to the reconstruction formula (\ref{I40}) if for a.e. $a\in (0,\infty)$ we have $\widehat{f_a}\in L^1(\mathbb{R})$,
then for all $x\in \mathbb{R}$ and a.e. $a,b\in (0,\infty)$ we get
\begin{align*}
f(b,x)&=\delta(a)^{-1/2}\delta(b)^{-1/2}\|g_a\|_{L^2(\mathbb{R})}^{-2}\int_{-\infty}^{\infty}\int_{-\infty}^{\infty}\mathcal{G}_g^\dagger f(a,g,b,\omega)[\varrho(g^a,\omega_b)g_a](x)dgd\omega
\\&=a^{1/2}b^{1/2}\|g_a\|_{L^2(\mathbb{R})}^{-2}\int_{-\infty}^{\infty}\int_{-\infty}^{\infty}\mathcal{G}_g^\dagger f(a,g,b,\omega)g(a,x-ag)e^{2\pi i b^{-1}\omega}dgd\omega,
\end{align*}
and also in particular we have
\begin{align*}
f(a,x)&=a\|g_a\|_{L^2(\mathbb{R})}^{-2}\int_{-\infty}^{\infty}\int_{-\infty}^{\infty}\mathcal{G}_g^\dagger f(a,g,a,\omega)[\varrho(g^a,\omega_a)g_a](x)dgd\omega
\\&=a\|g_a\|_{L^2(\mathbb{R})}^{-2}\int_{-\infty}^{\infty}\int_{-\infty}^{\infty}\mathcal{G}_g^\dagger f(a,g,a,\omega)g(a,x-ag)e^{2\pi i a^{-1}\omega}dgd\omega.
\end{align*}
\begin{example}
Let $N>0$ and also $g_N(a,x)=\chi_{[1/N,N]\times[-N,N]}(a,x)$ be a window function which has a compact support. Then, for all $f\in L^2(G_\tau)$ and $(a,x,b,\omega)\in G_{\tau\otimes\widehat{\tau}}$ we have
\begin{align*}
\mathcal{G}_{g_N}f(a,x,b,\omega)&
=b^{-1}\int_{-\infty}^{\infty}f(b,y)g_N(a,y-x)e^{-2\pi i\omega y}dy
\\&=b^{-1}\chi_{[1/N,N]}(a)e^{-2\pi i\omega x}\int_{-N}^{N}f(b,y+x)e^{-2\pi i\omega y}dy.
\end{align*}
If we set $x=0$ and $a=1$ we achieve
\begin{equation}
\mathcal{G}_{g_N}f(1,0,b,\omega)=b^{-1}\int_{-N}^{N}f(b,y)e^{-2\pi i\omega y}dy.
\end{equation}
Also, for $(a,x,b,\omega)$ we have
\begin{align*}
\mathcal{G}_{g_N}^\dagger f(a,x,b,\omega)&=a^{1/2}b^{-3/2}\int_{-\infty}^{\infty}f(b,y)g_N(a,y-ax)e^{-2\pi i\omega b^{-1}y}dy
\\&=a^{1/2}b^{-3/2}\chi_{[1/N,N]}(a)e^{-2\pi i\omega b^{-1}ax}\int_{-N}^{N}f(b,y+ax)e^{-2\pi i\omega b^{-1}y}dy.
\end{align*}
If we set $x=0$ and $a=1$ then
\begin{equation}
\mathcal{G}_{g_N}^\dagger f(1,0,b,\omega)=b^{-3/2}\int_{-N}^{N}f(b,y)e^{-2\pi i\omega b^{-1}y}dy.
\end{equation}
\end{example}
\begin{example}
Let $g(a,x)=ae^{-\pi(a^2+x^2)}$ be the Gaussian type window function in $L^2(G_\tau)$ with $\|g\|_{L^2(G_\tau)}=2^{-1}$. For a.e. $a\in(0,\infty)$ we have $\widehat{g_a}=g_a$ and $\|g_a\|_{L^1(\mathbb{R})}=ae^{-\pi a^2}$ also $\|g_a\|_{L^2(\mathbb{R})}=2^{-1/4}ae^{-\pi a^2}$. It is also separable i.e $g(a,x)=au(a)u(x)$. Then, for all $f\in L^2(G_\tau)$ and also $(a,x,b,\omega)\in G_{\tau\otimes\widehat{\tau}}$ we have
\begin{align*}
\mathcal{G}_gf(a,x,b,\omega)&=b^{-1}\int_{-\infty}^{\infty}f(b,y)g(a,y-x)e^{-2\pi i\omega y}dy
\\&=b^{-1}ae^{-\pi a^2}\int_{-\infty}^{\infty}f(b,y)e^{-\pi(y-x)^2}e^{-2\pi i\omega y}dy.
\end{align*}
Using the reconstruction formula if $\widehat{f_a}\in L^1(\mathbb{R})$ for a.e. $a\in (0,\infty)$ we have
\begin{align*}
f(a,x)&=a\|g_a\|_{L^2(\mathbb{R})}^{-2}\int_{-\infty}^{\infty}\int_{-\infty}^{\infty}\mathcal{G}_{g}f(a,y,a,\omega)g(a,x-y)e^{2\pi i\omega x}dyd\omega
\\&=2^{1/2}a^{-1}e^{\pi a^2}\int_{-\infty}^{\infty}\int_{-\infty}^{\infty}\mathcal{G}_{g}f(a,y,a,\omega)u(x-y)e^{2\pi i\omega x}dyd\omega
\\&=2^{1/2}a^{-1}e^{\pi a^2}\int_{-\infty}^{\infty}\int_{-\infty}^{\infty}\mathcal{G}_{g}f(a,y,a,\omega)e^{-\pi(x-y)^2}e^{2\pi i\omega x}dyd\omega.
\end{align*}
As well as, for all $(a,x,b,\omega)\in G_{\tau\otimes\widehat{\tau}}$ we have
\begin{align*}
\mathcal{G}_g^\dagger f(a,x,b,\omega)&=a^{1/2}b^{-3/2}\int_{-\infty}^{\infty}f(b,y)g(a,y-ax)e^{-2\pi i\omega b^{-1}y}dy
\\&=e^{-\pi a^2}a^{3/2}b^{-3/2}\int_{-\infty}^{\infty}f(b,y)e^{-\pi(y-ax)^2}e^{-2\pi i\omega b^{-1}y}dy.
\end{align*}
Due to the reconstruction formula if $\widehat{f_a}\in L^1(\mathbb{R})$ for a.e. $a\in (0,\infty)$ we have
\begin{align*}
f(a,x)&=a\|g_a\|_{L^2(\mathbb{R})}^{-2}\int_{-\infty}^{\infty}\int_{-\infty}^{\infty}\mathcal{G}_g^\dagger f(a,y,a,\omega)g(a,x-ay)e^{-2\pi i a^{-1}\omega}dyd\omega
\\&=2^{1/2}a^{-1}e^{\pi a^2}\int_{-\infty}^{\infty}\int_{-\infty}^{\infty}\mathcal{G}_g^\dagger f(a,y,a,\omega)u(x-ay)e^{-2\pi i a^{-1}\omega}dyd\omega
\\&=2^{1/2}a^{-1}e^{\pi a^2}\int_{-\infty}^{\infty}\int_{-\infty}^{\infty}\mathcal{G}_g^\dagger f(a,y,a,\omega)e^{-\pi(x-ay)^2}e^{-2\pi i a^{-1}\omega}dyd\omega.
\end{align*}
\end{example}
In the sequel we compute $\tau\times\widehat{\tau}$-time frequency group $G_{\tau\times\widehat{\tau}}$ and $\tau\otimes\widehat{\tau}$-time frequency group $G_{\tau\otimes\widehat{\tau}}$ associate to other semi-direct products.
\subsection{\bf The Weyl-Heisenberg group}
Let $K$ be an LCA group with the Haar measure $dk$ and $\widehat{K}$ be the dual group of $K$ with the Haar measure $d\omega$
also $\mathbb{T}$ be the circle group and let the continuous homomorphism $\tau:K\to Aut(\widehat{K}\times\mathbb{T})$ via $s\mapsto \tau_s$ be given by $\tau_s(\omega,z)=(\omega,z.\omega(s))$. The semi-direct product $G_\tau=K\ltimes_\tau(\widehat{K}\times\mathbb{T})$ is called the Weyl-Heisenberg group associated with $K$. The group operation for all $(k,\omega,z),(k',\omega',z')\in K\ltimes_\tau(\widehat{K}\times\mathbb{T})$ is
\begin{equation}
(k,\omega,z)\ltimes_\tau(k',\omega',z')=(k+k',\omega\omega',zz'\omega'(k)).
\end{equation}
If $dz$ is the Haar measure of the circle group, then $dkd\omega dz$ is a Haar measure for the Weyl-Heisenberg group and also the continuous homomorphism $\delta:K\to (0,\infty)$ given in (\ref{t}) is the constant function $1$. Thus, using Theorem 4.5, Proposition 4.6 of \cite{FollH} and also Proposition \ref{T} we can obtain the continuous homomorphism $\widehat{\tau}:K\to Aut(K\times\mathbb{Z})$
via $s\mapsto\widehat{\tau}_s$, where $\widehat{\tau}_s$ is given by
$\widehat{\tau}_s(k,n)=(k,n)\circ \tau_{s^{-1}}$ for all $(k,n)\in K\times\mathbb{Z}$ and $s\in K$.
Due to Theorem 4.5 of \cite{FollH}, for each $(k,n)\in K\times \mathbb{Z}$ and also for all $(\omega,z)\in \widehat{K}\times\mathbb{T}$ we have
\begin{align*}
\langle(\omega,z),(k,n)_s\rangle&=\langle(\omega,z),\widehat{\tau}_s(k,n)\rangle
\\&=\langle\tau_{s^{-1}}(\omega,z),(k,n)\rangle
\\&=\langle(\omega,z\overline{\omega(s)}),(k,n)\rangle
\\&=\langle\omega,k\rangle\langle z\overline{\omega(s)},n\rangle
\\&=\omega(k)z^n\overline{\omega(s)}^n
\\&=\omega(k-ns)z^n
=\langle\omega,k-ns\rangle\langle z,n\rangle=\langle(\omega,z),(k-ns,n)\rangle.
\end{align*}
Thus, $(k,n)_s=(k-ns,n)$ for all $k,s\in K$ and $n\in\mathbb{Z}$. Therefore, $G_{\widehat{\tau}}$ has the underlying set $K\times K\times\mathbb{Z}$ with the following group operation;
\begin{align*}
(s,k,n)\ltimes_{\widehat{\tau}}(s',k',n')&=\left(s+s',(k,n)\widehat{\tau}_s(k',n')\right)
\\&=\left(s+s',(k,n)(k'-n's,n')\right)=(s+s',k+k'-n's,n+n').
\end{align*}
The $G_{\tau\times\widehat{\tau}}$-time frequency group has the underlying set $K\times \widehat{K}\times\mathbb{T}\times K\times\mathbb{Z}$ with the group law
\begin{equation}
(k,\omega,z,s,n)\ltimes_{\tau\times\widehat{\tau}}(k',\omega',z',s',n')=(k+k',\omega\omega',zz'\omega'(k),s+s'-n'k,n+n').
\end{equation}
and the left Haar measure is $d\mu_{G_{\tau\times\widehat{\tau}}}(k,\omega,z,s,n)=dkd\omega dzdsdn$.

The $G_{\tau\otimes\widehat{\tau}}$-time frequency group has the underlying set $K\times K\times \widehat{K}\times\mathbb{T}\times K\times\mathbb{Z}$ with group operation
\begin{equation}
(r,k,\omega,z,s,n)\ltimes_{\tau\otimes\widehat{\tau}}(r',k',\omega',z',s',n')=(r+r',k+k',\omega\omega',zz'\omega'(r),s+s'-n'k,n+n'),
\end{equation}
and also the left Haar measure is $d\mu_{G_{\tau\otimes\widehat{\tau}}}(r,k,\omega,z,s,n)=drdkd\omega dzdsdn$.
\subsection{Euclidean groups}
Let $E(n)$ be the group of rigid motions of $\mathbb{R}^n$, the group generated by rotations and translations. If we put $H=\SO(n)$ and also $K=\mathbb{R}^n$, then $E(n)$ is the semi direct product of $H$ and $K$ with respect to the continuous homomorphism $\tau:\SO(n)\to Aut(\mathbb{R}^n)$ given by $\sigma\mapsto \tau_\sigma$ via $\tau_\sigma(\bf{x})=\sigma{\mathbf{x}}$ for all $\mathbf{x}\in\mathbb{R}^n$. The group operation for $E(n)$ is
\begin{equation}
(\sigma,\bf{x})\ltimes_{\tau}(\sigma',\bf{x}')=(\sigma\sigma',\bf{x}+\tau_\sigma(\bf{x}'))
=(\sigma\sigma',\bf{x}+\sigma\bf{x}').
\end{equation}
Identifying $\widehat{K}$ with $\mathbb{R}^n$, the continuous homomorphism $\widehat{\tau}:\SO(n)\to Aut(\mathbb{R}^n)$ is given by $\sigma\mapsto\widehat{\tau}_\sigma$ via
\begin{align*}
\langle{\mathbf{x}},\widehat{\tau}_\sigma({\bf{w}})\rangle&=\langle{\bf{x}},{\bf{w}}_\sigma\rangle
\\&=\langle\tau_{\sigma^{-1}}({\bf{x}}),{\bf{w}}\rangle
=\langle\sigma^{-1}{\bf{x}},{\bf{w}}\rangle=e^{-2\pi i (\sigma^{-1}{\bf{x}},{\bf{w}})},
\end{align*}
where $(.,.)$ stands for the standard inner product of $\mathbb{R}^n$. Since $H$ is compact we have $\delta\equiv 1$ and therefore $d\sigma d{\bf x}$ is a left Haar measure for $E(n)$. Thus, $G_{\widehat{\tau}}$ has the underlying manifold $\SO(n)\times\mathbb{R}^n$ with the group operation
\begin{equation}
(\sigma,\bf{w})\ltimes_{\widehat{\tau}}(\sigma',\bf{w}')=(\sigma\sigma',\bf{w}+\bf{w}'_\sigma).
\end{equation}
According to Theorem \ref{T} the left Haar measure $d\mu_{G_{\widehat{\tau}}}(\sigma,{\bf w})$ is precisely $d\sigma d{\bf w}$.
The $\tau\times\widehat{\tau}$-time frequency group $G_{\tau\times\widehat{\tau}}$ has the underlying manifold $\SO(n)\times \mathbb{R}^n\times\mathbb{R}^n$ with the group law
\begin{equation}
(\sigma,{\bf x},{\bf w})\ltimes_{\tau\times\widehat{\tau}}(\sigma',{\bf x}',{\bf w}')=(\sigma\sigma',{\bf x}+\sigma{\bf x}',{\bf w}+{\bf w}'_\sigma).
\end{equation}
Due to Proposition \ref{TT} and also compactness of $H$, $d\mu_{G_{\tau\times\widehat{\tau}}}(\sigma,{\bf x},{\bf w})=d\sigma d{\bf x}d{\bf w}$ is a Haar measure for $G_{\tau\times\widehat{\tau}}$. The $\tau\otimes\widehat{\tau}$-time frequency group $G_{\tau\otimes\widehat{\tau}}$ has the underlying manifold $\SO(n)\times \SO(n)\times\mathbb{R}^n\times\mathbb{R}^n$ equipped with the group law
\begin{align*}
(\sigma,\varrho,{\bf x},{\bf w})\ltimes_{\tau\otimes\widehat{\tau}}(\sigma',\varrho',{\bf x}',{\bf w}')
=(\sigma\sigma',\varrho\varrho',{\bf x}+\sigma{\bf x}',{\bf w}+{\bf w}'_\varrho),
\end{align*}
and also the Haar measure is $d\mu_{G_{\tau\otimes\widehat{\tau}}}(\sigma,\varrho,{\bf x},{\bf w})=d\sigma d\varrho d{\bf x}d{\bf w}$.

\bibliographystyle{amsplain}

\begin{thebibliography}{10}

\bibitem{Dix} Dixmier. J., \textit{$C^{*}$-Algebras}, North-Holland and Publishing company, 1977.

\bibitem{FithStro} Feichtinger. H.G., ${\rm Str\ddot{o}hmer}$. T., \textit{Advances in Gabor Analysis},
Series: Applied and Numerical Harmonic Analysis, ${\rm Birkh\ddot{a}user}$ 2003.

\bibitem{FithZim} Feichtinger. H.G., Zimmermann. G., \textit{A Banach space of test functions for Gabor analysis}, Gabor Analysis and Algorithms: Theory and Applications, p.123-170, Appl. Numer. Harmon. Anal.,
    ${\rm Birkh\ddot{a}user}$ Boston, (1998) Boston, MA.

\bibitem{FollH} Folland. G.B, \textit{A Course in Abstract Harmonic Analysis}, CRC press, 1995.

\bibitem{FollR} Folland. G.B, \textit{Real Analysis, Modern Techniques and their Applications, 2nd ed},
Wiley-Inter-science publication, 1999.

\bibitem{Gab} Gabor. D., \textit{Theory of communication}, J. IEE, {\bf 93}(26), p.429-457, 1964.

\bibitem{Gro1} ${\rm Gr\ddot{o}chenig}$. K., \textit{Foundation of Time-Frequency Analysis}, Appl. Numer. Harmon. Anal., ${\rm Birkh\ddot{a}user}$ Boston, (2001) Boston, MA.

\bibitem{Gro2} ${\rm Gr\ddot{o}chenig}$. K., \textit{Aspects of Gabor Analysis on locally compact abelian groups}, Gabor Analysis and Algorithms: Theory and Applications, p.211-231, Appl. Numer. Harmon. Anal.,
    ${\rm Birkh\ddot{a}user}$ Boston, (1998) Boston, MA.

\bibitem{HeilW} Heil. C., Walnut. F. D., \textit{Continuous and discrete wavelet transforms}, SIAM, SIAM Rev, Vol.31, No 4, p.628-666, 1989.

\bibitem{HR1} Hewitt. E., Ross. R. A., \textit{Abstract Harmonic Analysis}, Vol 1, Springer, No.115, 1963.

\bibitem{HO} Hochschild. G., \textit{The Structure of Lie Groups}, Hpolden-day, San Francisco, 1965.

\bibitem{Lip} Lipsman. R., \textit{Non-Abelian Fourier Analysis},
Bull. Sc. Math., $2^e$ series, {\bf 98}, pp.209-233, 1974.

\bibitem{50} Reiter. H., Stegeman. J. D., \textit{Classical Harmonic Analysis}, 2nd Ed, Oxford University Press, New York, 2000.

\bibitem{Seg1} Segal. I.E., \textit{An extention of Plancherel's formula to seprable unimodular groups}, Ann. of Math, {\bf 52}(1950), pp.272-292.

\bibitem{Ta} Taylor. M.E., \textit{Noncommutative Harmonic Analysis}, Mathematics surveys and Monographs, No 22, American Mathematical Society, Providence, Rhode Island, 1986.

\end{thebibliography}

\end{document}